\documentclass[12pt]{article}

\usepackage[T1]{fontenc}
\usepackage{amsmath,amssymb,amsthm,mathtools,bm}
\usepackage{booktabs}
\usepackage{graphicx}
\usepackage{subcaption}
\usepackage[hidelinks]{hyperref}
\usepackage{url}
\usepackage{enumitem}
\usepackage{xcolor}
\usepackage{array}
\usepackage{tabularx}
\usepackage{microtype}
\usepackage{placeins}
\usepackage{geometry}

\newcolumntype{L}[1]{>{\raggedright\arraybackslash}p{#1}}
\newcolumntype{C}[1]{>{\centering\arraybackslash}p{#1}}

\newtheorem{theorem}{Theorem}[section]
\newtheorem{proposition}[theorem]{Proposition}

\newtheorem{definition}[theorem]{Definition}

\newcommand{\R}{\mathbb{R}}
\newcommand{\norm}[1]{\left\lVert #1 \right\rVert}

\title{A Two-Channel F-Transform Representation for Early Trajectory Characterization in Iterated Correlation Dynamics}
\author{
  Ishrak Alhajj Hassan \\
  \normalsize Department of Mathematics, Faculty of Science, University of Ostrava, Czech Republic \\
  \texttt{ishrak.alhajj.s01@osu.cz}
}
\date{}  

\usepackage{etoolbox}
\patchcmd{\maketitle}{\vskip 2em}{\vskip 1em}{}{}

\begin{document}

\maketitle

\begin{abstract}
\noindent
Many nonlinear iterative systems generate high-dimensional trajectories whose early behavior is informative but difficult to compare directly. This paper derives a fixed-dimensional F-transform coordinate representation for early trajectories of iterated Pearson correlation matrices. The construction is defined on the first five-point post-transient signal window, which is the shortest sampled window that simultaneously places the three symmetric fuzzy nodes at observed positions and supports a nondegenerate centered first-degree F-transform coefficient, thereby providing the earliest feasible local level--trend characterization within this sampled geometry.
The representation combines two logarithmic observables of the post-transient dynamics: step size, measuring update magnitude, and contraction ratio, measuring the local evolution of that magnitude. For each five-point signal, the three zero-degree F-transform coefficients encode the left, central, and right fuzzy-local levels, while the centered first-degree coefficient captures the local trend around the central node. Applying this same four-coordinate construction to the step-size and contraction-ratio signals yields the eight-dimensional descriptor
\(\Psi=(v_1,v_2,v_3,s_2,u_1,u_2,u_3,r_2)\),
with a common coordinate form across matrix sizes. For the fixed construction,
\(\Psi=M(q_2,\ldots,q_7)^{\top}\), \(q_k=\log\delta_k\), with
\(\operatorname{rank}M=6\). Thus, the descriptor is an injective,
overcomplete representation of the six logged step sizes underlying the
two channels, preserving the information in the observed segment while
reorganizing it into interpretable fuzzy-local levels and trends. The
representation is Lipschitz stable, and the centered first-degree
coefficient recovers affine trends exactly.

Convergence-length approximation is used as a downstream test of retained
dynamical information. Across 22 matrix dimensions and 22,000 trajectories,
repeated train--test evaluation shows predictive performance comparable to
raw two-channel and statistical-summary representations. Observation-matched
and sensitivity analyses indicate that performance depends more strongly on
the amount of trajectory information observed than on nearby choices in the
fuzzy construction. PCA shows that the first two principal components explain
on average \(84.47\%\) of the descriptor variance. Clustering reveals
reproducible coarse organization, with the strongest mean silhouette at
\(k=2\) and high stability for smaller numbers of clusters.

\end{abstract}

\noindent\textbf{Keywords:} F-transform \textbullet\ fuzzy feature extraction \textbullet\ trajectory representation \textbullet\ iterated correlation matrices \textbullet\ contraction dynamics \textbullet\  convergence characterization \textbullet\ soft computing

\section{Introduction}\label{sec:introduction}

Nonlinear iterative systems often generate trajectories whose early
behavior carries information about convergence behavior. In many computational settings, however, such trajectories are inconvenient objects for analysis: they may have variable stopping times, their raw states may be high-dimensional, and direct comparison of trajectory segments may obscure the local behavior that distinguishes one run from another. A useful representation should therefore convert a short trajectory segment into fixed-dimensional coordinates while retaining information relevant to convergence, comparison, visualization, and exploratory geometric analysis.

This paper studies such a representation problem for iterated Pearson correlation matrices. Given an initial matrix \(P_0\in\R^{n\times n}\), the iteration generates a trajectory
\[
P_{k+1}=f(P_k),\qquad k\ge 0.
\]
Under the row-wise Pearson correlation update, each row is centered, normalized to unit Euclidean norm, and correlated with all other normalized rows. From the first iterate onward, the trajectory lies in the set of symmetric positive semidefinite matrices with unit diagonal. This update is elementary to compute, but its global behavior is mathematically nontrivial. Classical work on repeated correlations and CONCOR reported rapid numerical stabilization and convergence toward block-pattern structures, particularly in psychometrics and social network blockmodeling \cite{McQuittyClark1968,BreigerBoormanArabie1975,KruskalCONCOR,Chen2002}.

Recent work has organized this iteration quantitatively. A large-scale empirical study reported stable laws across dimensions from 3 to 2000, including a distinguished first-step contraction, nearly monotone post-transient decay, a stable relationship between contraction ratio and step size, and empirically bounded convergence lengths under the tested initialization model \cite{EmpiricalLaws}. A subsequent finite-step analysis converted part of this empirical structure into probabilistic bounds for contraction ratios conditioned on normalized step size \cite{FiniteBounds}. These results describe important global and stepwise properties of the dynamics. They do not, however, provide a fixed coordinate representation of an early trajectory segment that can be used directly for comparison, visualization, or downstream statistical learning.

The present paper addresses this representation problem using the F-transform framework. The F-transform replaces a signal by local coefficients computed over a fuzzy partition \cite{Perfilieva2006,PerfilievaDankovaBede2011}. Zero-degree coefficients summarize local levels, while first-degree coefficients capture local linear trends \cite{KreinovichPerfilieva2011}. This value--trend separation has been useful in fuzzy modeling, time-series processing, filtering, and similarity analysis \cite{NovakPerfilievaHolcapekKreinovich2014,NovakMirshahi2021,InsightFuzzyModeling}. Here, the F-transform is used as a fuzzy feature-extraction mechanism for partial dynamical information.

The representation is built from two scalar observables of the matrix trajectory. The first is the Frobenius step size
\[
\delta_k=\|P_{k+1}-P_k\|_F,
\]
which measures the magnitude of one update. The second is the contraction ratio
\[
\rho_k=\frac{\delta_{k+1}}{\delta_k},
\]
which describes the local contraction behavior from one step to the next. On a short post-transient segment, the logarithms of these two signals are projected onto a three-node triangular fuzzy partition. Three zero-degree coefficients and one centered first-degree coefficient are retained for each channel, producing the eight-dimensional descriptor
\[
\Psi=(v_1,v_2,v_3,s_2,u_1,u_2,u_3,r_2).
\]
The first four coordinates describe local levels and trend of the step-size channel, while the last four provide the corresponding representation of the contraction-ratio channel.

A structural feature of this construction becomes visible once the two channels are written through the common logged step-size sequence
\[
q_k=\log \delta_k.
\]
Because
\[
\log\rho_k=q_{k+1}-q_k,
\]
the five step-size observations and five contraction-ratio observations used by the descriptor are generated by the six quantities
\[
q_2,\ldots,q_7.
\]
For the fixed triangular partition used here, the complete descriptor can therefore be written as
\[
\Psi=M(q_2,\ldots,q_7)^T
\]
for an explicit \(8\times6\) matrix \(M\). We show that
\[
\operatorname{rank}M=6.
\]
Hence the eight coordinates form an injective, overcomplete representation of the six underlying logged step sizes. This result gives an exact description of the information structure of the descriptor: the additional coordinates do not introduce additional independent observations, but reorganize the same early trajectory information into fuzzy-local levels and trends in the step-size and contraction-ratio channels.

Convergence-length approximation is used here as a downstream test of
the dynamical information retained by the representation, rather than
as the primary objective of the method.

The empirical question is then whether this structured coordinate
system retains useful information about convergence length while
providing a representation that is structured, interpretable, and
suitable for geometric analysis. We evaluate this question across 22 matrix dimensions and 1000 trajectories per dimension. The full descriptor is compared with raw step-size and contraction-ratio samples, statistical summaries, PCA-transformed raw coordinates, the step-size-only F-transform descriptor, and observation-matched controls. Random Forest regression is used as a common downstream benchmark for convergence-length approximation, with repeated train--test splits used to assess the stability of the comparison. Additional analyses examine sensitivity to the fuzzy construction, the position and length of the observed trajectory segment, convergence-target stability, and an equal-dimensional non-fuzzy local-summary representation.

Prediction is only one use of the coordinate system. We also study the geometry of the standardized descriptor using PCA and \(k\)-means clustering. The PCA analysis measures the extent to which variation in the eight coordinates concentrates along a small number of dominant directions. The clustering analysis examines coarse organization of the descriptor space over \(k=2,\ldots,8\), together with partition stability and a permuted-feature reference. These analyses provide complementary views of how early contraction trajectories are organized after the fuzzy-local transformation.

The contributions of the paper are as follows.
\begin{enumerate}[leftmargin=2em]

\item A two-channel F-transform descriptor is constructed for early trajectories of iterated correlation matrices. It separates local level from local trend in both the step-size and contraction-ratio channels.

\item The information structure of the descriptor is characterized exactly. The eight coordinates satisfy
\[
\Psi=M(q_2,\ldots,q_7)^T,
\qquad
\operatorname{rank}M=6,
\]
so the descriptor is an injective, overcomplete representation of the six logged step sizes underlying the two channels.

\item The representation is evaluated across 22 matrix dimensions and 1000 trajectories per dimension using a repeated-split Random Forest benchmark and controlled raw, statistical, PCA, observation-matched, and local-summary comparisons. Sensitivity analyses examine fuzzy-design choices, the observed
trajectory segment, and the convergence criterion.

\item PCA and clustering analyses characterize the geometry of the descriptor. The first two principal components explain on average \(84.47\%\) of the variance, while the clustering analysis identifies stable coarse organization with the strongest mean silhouette at \(k=2\).

\end{enumerate}

The paper is organized as follows. Section~\ref{sec:background} gives background on iterated correlation dynamics and F-transforms. Section~\ref{sec:representation} defines the two-channel representation and establishes its structural properties. Section~\ref{sec:experimental-framework} describes the experimental pipeline and compared feature families. Section~\ref{sec:results} presents the predictive, sensitivity, and geometric results. Section~\ref{sec:discussion} discusses interpretation, limitations, and future work.

\section{Background}\label{sec:background}

\subsection{Iterated Pearson correlation dynamics}

Let \(P_k\in\R^{n\times n}\). The Pearson correlation update maps
\(P_k\) to \(P_{k+1}\) by centering each row, normalizing each
centered row, and forming the Gram matrix of the resulting row
vectors. If \(Z_k\) denotes the row-centered and row-normalized
version of \(P_k\), then
\[
P_{k+1}=Z_k Z_k^T.
\]
Thus, from the first update onward, \(P_k\) is symmetric positive
semidefinite with diagonal entries equal to one. The trajectory
therefore evolves in the elliptope of correlation matrices.

Repeated-correlation procedures have a long history in psychometrics,
network analysis, and information visualization. Iterative
intercolumnar correlation was studied by McQuitty and Clark
\cite{McQuittyClark1968}, while CONCOR-type blockmodeling
\cite{BreigerBoormanArabie1975,KruskalCONCOR} and generalized
association plots \cite{Chen2002} provide further examples of
iterated-correlation procedures. These procedures motivate the study
of convergence behavior, while the present paper focuses on a
specific representation problem: how to summarize the early
contraction behavior of such iterations in fixed, interpretable
coordinates.

More recent time-series representation methods learn latent coordinates
directly from data. TS2Vec constructs hierarchical contrastive
representations across temporal scales \cite{Yue2022}, while T-Rep
incorporates explicit temporal information into learned representations
\cite{Fraikin2024}. In fuzzy learning, locality-preserving projected
fuzzy clustering provides another example of combining fuzzy structure
with learned low-dimensional coordinates \cite{Zhou2021}. The present
construction differs from these approaches in that the representation is
fixed analytically rather than learned from data, and every coordinate
has an explicit dynamical interpretation.
The objective here is therefore different from learning an
accuracy-optimized latent embedding: the representation is fixed
independently of the downstream prediction target, its information
structure can be characterized exactly, and every coordinate has an
explicit dynamical interpretation. For this reason, the principal
empirical controls emphasize raw, statistical, PCA-based, and
observation-matched representations that allow the effect of coordinate
organization to be isolated directly.

\subsection{F-transform and fuzzy local summaries}

Let \([a,b]\subset\R\). A family
\(A=\{A_1,\dots,A_m\}\), with
\(A_j:[a,b]\to[0,1]\), is a fuzzy partition with nodes \(c_j\)
if each \(A_j\) is localized around \(c_j\), satisfies
\(A_j(c_j)=1\), and fulfills the Ruspini condition
\[
\sum_{j=1}^{m}A_j(t)=1,\qquad t\in[a,b].
\]
For a bounded signal \(z:[a,b]\to\R\), the zero-degree F-transform
coefficients are weighted local averages:
\begin{equation}\label{eq:f0-continuous}
F_j^0[z]
=
\frac{\int_a^b z(t)A_j(t)\,dt}
     {\int_a^b A_j(t)\,dt},
\qquad j=1,\dots,m.
\end{equation}
The first-degree F-transform gives a local linear approximation
\begin{equation}\label{eq:f1-local}
F_j^1[z](t)
=
\beta_j^0+\beta_j^1(t-c_j),
\end{equation}
where \(\beta_j^0=F_j^0[z]\), and
\begin{equation}\label{eq:f1-continuous}
\beta_j^1
=
\frac{\int_a^b z(t)(t-c_j)A_j(t)\,dt}
     {\int_a^b (t-c_j)^2A_j(t)\,dt}.
\end{equation}
For discrete signals, the integrals are replaced by finite sums.

In this work, the F-transform is used as a feature-extraction
mechanism. Zero-degree coefficients encode local levels, whereas a
centered first-degree coefficient encodes local trend. Because
neighboring fuzzy supports overlap, intermediate observations
contribute to adjacent local summaries rather than being assigned to
disjoint temporal blocks. This gives the resulting coordinates a
direct local level--trend interpretation.

\section{Two-Channel Early Trajectory Representation}
\label{sec:representation}

\subsection{Contraction observables}

For a trajectory \((P_k)\), define the Frobenius step size
\begin{equation}\label{eq:delta}
\delta_k
=
\norm{P_{k+1}-P_k}_F,
\end{equation}
where
\[
\norm{M}_F
=
\left(
\sum_{i=1}^n\sum_{j=1}^n M_{ij}^2
\right)^{1/2}.
\]
Whenever \(\delta_k>0\), define the contraction ratio
\begin{equation}\label{eq:rho}
\rho_k
=
\frac{\delta_{k+1}}{\delta_k}.
\end{equation}
The two sequences describe different aspects of the same contraction
trajectory. The step-size sequence measures the magnitude of each
update, whereas the contraction-ratio sequence expresses how that
magnitude changes from one step to the next.

Values \(\rho_k>1\) are retained and represent local increases in
step size (overshoots), whereas \(\rho_k<1\) represents local
contraction; no monotonic decrease of \(\delta_k\) is assumed.
Such local overshoots are consistent with the nearly monotone
post-transient behavior reported previously for this iteration
\cite{EmpiricalLaws}.

The response variable used in the predictive experiments is the
convergence length
\begin{equation}\label{eq:nconv}
N_{\mathrm{conv}}
=
\min\{k\ge 1:\delta_k<10^{-6}\}.
\end{equation}

\subsection{Early post-transient segment}

The main descriptor uses the five post-transient positions
\begin{equation}\label{eq:window}
T=\{2,3,4,5,6\}.
\end{equation}
The segment begins after the distinguished initial contraction
observed in previous empirical studies of the iteration
\cite{EmpiricalLaws}.

For positive step sizes, write
\begin{equation}\label{eq:q-def}
q_k=\log\delta_k.
\end{equation}
The logarithmic step-size channel is then
\begin{equation}\label{eq:x-signal}
x
=
(q_2,q_3,q_4,q_5,q_6),
\end{equation}
whereas the logarithmic contraction-ratio channel is
\begin{equation}\label{eq:y-signal}
y
=
(\log\rho_2,\log\rho_3,\log\rho_4,\log\rho_5,\log\rho_6).
\end{equation}
Since
\begin{equation}\label{eq:logrho}
\log\rho_k
=
q_{k+1}-q_k,
\end{equation}
the second channel can equivalently be written as
\begin{equation}\label{eq:y-q}
y
=
(q_3-q_2,\,
 q_4-q_3,\,
 q_5-q_4,\,
 q_6-q_5,\,
 q_7-q_6).
\end{equation}

Thus both channels contain five values, but the complete two-channel
construction is determined by the six consecutive logged step sizes
\[
q_2,\ldots,q_7.
\]
In particular, \(q_7\) enters intrinsically through the final
contraction ratio
\[
\rho_6=\frac{\delta_7}{\delta_6}.
\]
This six-step segment is therefore the underlying observed information
used by the full descriptor.

\subsection{Fuzzy projection}

Identify the five positions in \(T\) with the local coordinates
\(\{0,1,2,3,4\}\). We use three triangular basic functions with
centers
\[
c_1=0,\qquad c_2=2,\qquad c_3=4,
\]
and bandwidth \(h=2\):
\begin{equation}\label{eq:partition}
A_j(t)
=
\max\left(
0,
1-\frac{|t-c_j|}{h}
\right),
\qquad j=1,2,3.
\end{equation}
On the discrete five-point segment, their weights are
\[
A_1=(1,\tfrac12,0,0,0),
\qquad
A_2=(0,\tfrac12,1,\tfrac12,0),
\qquad
A_3=(0,0,0,\tfrac12,1).
\]
They satisfy the Ruspini condition at each sampled position.

For a discrete signal
\(z=(z_0,z_1,z_2,z_3,z_4)\), define
\begin{equation}\label{eq:f0-discrete}
F_j^0[z]
=
\frac{\sum_{t=0}^4 z_tA_j(t)}
     {\sum_{t=0}^4 A_j(t)},
\qquad j=1,2,3,
\end{equation}
and retain the centered first-degree coefficient
\begin{equation}\label{eq:f1-discrete}
\beta_2^1[z]
=
\frac{\sum_{t=0}^4
z_t(t-c_2)A_2(t)}
{\sum_{t=0}^4
(t-c_2)^2A_2(t)}.
\end{equation}
For the present partition these coefficients have the explicit form
\begin{equation}\label{eq:closed-form-f}
\begin{aligned}
F_1^0[z]&=\frac{2z_0+z_1}{3},\\
F_2^0[z]&=\frac{z_1+2z_2+z_3}{4},\\
F_3^0[z]&=\frac{z_3+2z_4}{3},\\
\beta_2^1[z]&=\frac{z_3-z_1}{2}.
\end{aligned}
\end{equation}

Applying these coefficients to the step-size and contraction-ratio
channels gives
\[
v_j=F_j^0[x],
\qquad
u_j=F_j^0[y],
\qquad
s_2=\beta_2^1[x],
\qquad
r_2=\beta_2^1[y].
\]

\begin{definition}[Two-channel early trajectory representation]
The two-channel F-transform representation of a trajectory \(P\) is
\begin{equation}\label{eq:representation}
\Psi(P)
=
(v_1,v_2,v_3,s_2,u_1,u_2,u_3,r_2)
\in\R^8.
\end{equation}
Its step-size component is
\[
\Psi_\delta(P)
=
(v_1,v_2,v_3,s_2),
\]
and its contraction-ratio component is
\[
\Psi_\rho(P)
=
(u_1,u_2,u_3,r_2).
\]
\end{definition}

The eight coordinates therefore provide overlapping fuzzy-local
levels and centered trends for the two dynamical observables. They are
not eight independent observations; their exact information structure
is characterized next.

\subsection{Exact structure and injectivity}

Let
\[
q
=
(q_2,q_3,q_4,q_5,q_6,q_7)^T.
\]
Using \eqref{eq:closed-form-f} together with
\eqref{eq:y-q}, the descriptor is a linear transformation of \(q\).

\begin{theorem}[Exact structure of the descriptor]
\label{thm:rank-structure}
For the three-node triangular construction,
\begin{equation}\label{eq:matrix-map}
\Psi=Mq,
\end{equation}
where
\begin{equation}\label{eq:M-matrix}
M=
\begin{pmatrix}
 \frac23 & \frac13 & 0 & 0 & 0 & 0 \\[2pt]
 0 & \frac14 & \frac12 & \frac14 & 0 & 0 \\[2pt]
 0 & 0 & 0 & \frac13 & \frac23 & 0 \\[2pt]
 0 & -\frac12 & 0 & \frac12 & 0 & 0 \\[2pt]
 -\frac23 & \frac13 & \frac13 & 0 & 0 & 0 \\[2pt]
 0 & -\frac14 & -\frac14 & \frac14 & \frac14 & 0 \\[2pt]
 0 & 0 & 0 & -\frac13 & -\frac13 & \frac23 \\[2pt]
 0 & \frac12 & -\frac12 & -\frac12 & \frac12 & 0
\end{pmatrix}.
\end{equation}
Moreover,
\begin{equation}\label{eq:rankM}
\operatorname{rank}M=6.
\end{equation}
Consequently, the map
\[
q\longmapsto\Psi
\]
is injective.
\end{theorem}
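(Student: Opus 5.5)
The argument has two parts: verify the entries of $M$ by direct substitution, then show $\operatorname{rank}M=6$ by exhibiting a left inverse, which also gives injectivity.

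\emph{Entries of $M$.} Apply the closed forms \eqref{eq:closed-form-f} to $x=(q_2,\dots,q_6)$, so that $z_t=q_{t+2}$. This gives, for example, $v_1=(2q_2+q_3)/3$ and $s_2=(q_5-q_3)/2$; these are rows 1--4 of $M$. For the contraction-ratio channel, substitute $y$ from \eqref{eq:y-q}, that is $y_t=q_{t+3}-q_{t+2}$, into the same formulas and expand:
\begin{align*}
u_1&=\tfrac{2(q_3-q_2)+(q_4-q_3)}{3}=-\tfrac23q_2+\tfrac13q_3+\tfrac13q_4,\\
u_2&=\tfrac{(q_4-q_3)+2(q_5-q_4)+(q_6-q_5)}{4}=-\tfrac14q_3-\tfrac14q_4+\tfrac14q_5+\tfrac14q_6.
\end{align*}
The same expansion gives $u_3$ and $r_2=\tfrac12\bigl((q_6-q_5)-(q_4-q_3)\bigr)$. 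Each line is a routine comparison with the corresponding row of $M$. Since $M$ is $8\times6$ and $\Psi=Mq$ by linearity, this establishes \eqref{eq:matrix-map}.

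\emph{Rank.} We have $\operatorname{rank}M\le 6$ because $M$ has six columns. For the lower bound, rather than row-reducing the whole matrix, we reconstruct each $q_k$ explicitly from the coordinates of $\Psi$; this amounts to building a left inverse of $M$. Proceed in the following order:
\begin{itemize}
\item From $v_2=\tfrac14q_3+\tfrac12q_4+\tfrac14q_5$ and $s_2=\tfrac12(q_5-q_3)$, we get two linear combinations of $q_3,q_4,q_5$.
\item A third, independent combination comes from the relation $u_1+v_1=q_3+\tfrac13q_4$.
\item The $3\times3$ system in $(q_3,q_4,q_5)$ formed by these three equations has determinant equal to a nonzero multiple of $1/4$ (a quick check), so it is invertible and $q_3,q_4,q_5$ are recovered.
\item Then $q_2=\tfrac32\bigl(v_1-\tfrac13q_3\bigr)$.
\item Then $q_6=\tfrac32\bigl(v_3-\tfrac13q_5\bigr)$.
\item Finally $q_7=\tfrac32\bigl(u_3+\tfrac13q_5+\tfrac13q_6\bigr)$.
\end{itemize}
Thus $q=L\Psi$ for an explicit $6\times8$ matrix $L$ with $LM=I_6$. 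Hence $\operatorname{rank}M=6$, and the map $q\mapsto\Psi$ is injective.

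Equivalently, one can exhibit the $6\times6$ minor of $M$ formed by rows $v_1,v_2,v_3,s_2,u_1,u_3$ and show it is nonzero. This minor is block-triangular in a suitable ordering of the variables, and its nonvanishing is exactly what the elimination above verifies.

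The main obstacle is the independence step for $(q_3,q_4,q_5)$. Only $v_2$ and $s_2$ involve solely these middle variables, so a third combination has to be extracted from one of the rows $v_1$, $u_1$, $u_2$, $r_2$. The combination $u_1+v_1$ cancels $q_2$ and serves this purpose. Before finalizing, I would check its $3\times3$ determinant directly:
\[
\det\begin{pmatrix}\tfrac14&\tfrac12&\tfrac14\\ -\tfrac12&0&\tfrac12\\ 1&\tfrac13&0\end{pmatrix}
=\tfrac14\bigl(0-\tfrac16\bigr)-\tfrac12\bigl(0-\tfrac12\bigr)+\tfrac14\bigl(-\tfrac16-0\bigr)=\tfrac16\neq0.
\]
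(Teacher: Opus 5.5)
Your argument is correct and is essentially the paper's proof. Your elimination uses exactly rows $1,2,3,4,5,7$ of $M$, which is the same $6\times6$ minor whose determinant the paper states as $2/81$, and your back-substitution just carries out that computation explicitly.

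There is one arithmetic slip: $u_1+v_1=\tfrac23q_3+\tfrac13q_4$, not $q_3+\tfrac13q_4$. The $3\times3$ determinant is therefore $\tfrac1{12}$ rather than $\tfrac16$, which matches $(\tfrac23)^3\cdot\tfrac1{12}=\tfrac{2}{81}$. It is still nonzero, so your conclusion stands.
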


\begin{proof}
The rows of \(M\) follow directly from the explicit F-transform
coefficients in \eqref{eq:closed-form-f}. For example,
\[
v_1=\frac{2q_2+q_3}{3},
\qquad
s_2=\frac{q_5-q_3}{2},
\]
while
\[
u_1
=
\frac{2(q_3-q_2)+(q_4-q_3)}{3}
=
\frac{-2q_2+q_3+q_4}{3}.
\]
The remaining rows are obtained similarly.

To establish the rank, consider the \(6\times6\) minor formed by
rows \(1,2,3,4,5,7\) of \(M\). Its determinant is
\[
\frac{2}{81}\neq0.
\]
Hence \(\operatorname{rank}M\ge6\). Since \(M\) has six columns,
\(\operatorname{rank}M=6\). Therefore its null space is trivial and
the map \(q\mapsto\Psi\) is injective.
\end{proof}

The eight coordinates are therefore an overcomplete representation of
six underlying logged step sizes. In particular, they satisfy two
independent linear identities:
\begin{align}
-3v_1+6v_2-3v_3-5s_2-3u_1+8u_2 &=0,
\label{eq:dependency1}\\
-3v_1+6v_2-3v_3+3s_2-3u_1+4r_2 &=0.
\label{eq:dependency2}
\end{align}
These identities make the redundancy explicit. At the same time,
Theorem~\ref{thm:rank-structure} shows that no information contained
in \(q_2,\ldots,q_7\) is lost by passing to the eight F-transform
coordinates.

The role of the contraction-ratio channel is therefore structural
rather than informationally independent. Because
\(\log\rho_k=q_{k+1}-q_k\), it does not introduce new underlying
trajectory observations beyond \(q_2,\ldots,q_7\). Its purpose is to
make local contraction behavior explicit in the coordinate system,
alongside the local levels and trend of the step-size channel.

\subsection{Stability and trend interpretation}

The descriptor also has a direct stability property.

\begin{proposition}[Lipschitz stability]
\label{prop:stability}
For any two pairs of five-point signals
\((x,y)\) and \((\tilde x,\tilde y)\),
\[
\|\Psi(x,y)-\Psi(\tilde x,\tilde y)\|_2
\le
2\left(
\|x-\tilde x\|_\infty
+
\|y-\tilde y\|_\infty
\right).
\]
\end{proposition}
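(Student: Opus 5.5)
The plan is to treat \(\Psi\) as a linear map of the pair of five-point signals \((x,y)\), without going through \(q\). The four coefficients in \eqref{eq:closed-form-f} are linear functionals of a signal \(z\in\R^5\). Hence
\[
\Psi(x,y)-\Psi(\tilde x,\tilde y)=\bigl(\Phi(x-\tilde x),\,\Phi(y-\tilde y)\bigr),
\]
where \(\Phi(z)=(F_1^0[z],F_2^0[z],F_3^0[z],\beta_2^1[z])\in\R^4\). So it suffices to bound \(\|\Phi(z)\|_2\) by a constant times \(\|z\|_\infty\) for a single channel, and then combine the two channels.

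For the single-channel bound, each zero-degree coefficient is a convex combination of the entries of \(z\): the weights \(A_j(t)/\sum_t A_j(t)\) are nonnegative and sum to one. Therefore \(|F_j^0[z]|\le\|z\|_\infty\) for \(j=1,2,3\). For the trend coefficient, the closed form gives \(|\beta_2^1[z]|=|z_3-z_1|/2\le\|z\|_\infty\). Thus every coordinate of \(\Phi(z)\) is bounded in absolute value by \(\|z\|_\infty\), and
\[
\|\Phi(z)\|_2\le\sqrt4\,\|z\|_\infty=2\|z\|_\infty .
\]

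To combine the channels, let \(a=\|\Phi(x-\tilde x)\|_2\) and \(b=\|\Phi(y-\tilde y)\|_2\). Then \(\|\Psi-\tilde\Psi\|_2=\sqrt{a^2+b^2}\le a+b\). Substituting the single-channel bound gives
\[
\|\Psi-\tilde\Psi\|_2\le 2\|x-\tilde x\|_\infty+2\|y-\tilde y\|_\infty,
\]
which is the stated inequality.

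None of these steps is difficult. The one thing to check is the trend coefficient, because its weights are not convex (they have mixed signs). The explicit form \((z_3-z_1)/2\) shows that the sum of the absolute values of its weights is exactly \(1\), so it obeys the same bound as the level coefficients. Bounding it by \(\|z\|_\infty\) is precisely what makes the per-channel constant \(\sqrt4=2\) rather than something larger.

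As a remark, the step \(\sqrt{a^2+b^2}\le a+b\) is loose. The same argument gives the sharper constant \(2\) in front of \(\sqrt{\|x-\tilde x\|_\infty^2+\|y-\tilde y\|_\infty^2}\), but that is not needed for the statement.
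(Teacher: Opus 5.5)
Your proposal is correct and follows essentially the same route as the paper: bound each zero-degree coefficient by the sup-norm (normalized nonnegative average) and the centered slope $(z_3-z_1)/2$ likewise, so each channel contributes at most $2\|\cdot\|_\infty$ in $\ell_2$. The paper then combines the channels through $\|\Psi-\tilde\Psi\|_2^2\le 4\|x-\tilde x\|_\infty^2+4\|y-\tilde y\|_\infty^2$, which is exactly the sharper intermediate form in your closing remark, before passing to the stated sum.
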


\begin{proof}
Each zero-degree coefficient in
\eqref{eq:closed-form-f} is a normalized nonnegative weighted average,
and therefore
\[
|F_j^0[z]-F_j^0[\tilde z]|
\le
\|z-\tilde z\|_\infty.
\]
For the centered slope coefficient,
\[
\left|
\beta_2^1[z]-\beta_2^1[\tilde z]
\right|
=
\frac12
\left|
(z_3-\tilde z_3)
-
(z_1-\tilde z_1)
\right|
\le
\|z-\tilde z\|_\infty.
\]
Thus each of the four coordinates associated with one channel is
bounded by the sup-norm perturbation of that channel. Consequently,
\[
\|\Psi(x,y)-\Psi(\tilde x,\tilde y)\|_2^2
\le
4\|x-\tilde x\|_\infty^2
+
4\|y-\tilde y\|_\infty^2,
\]
which gives the stated bound.
\end{proof}

The centered first-degree coefficient also recovers affine trends
exactly.

\begin{proposition}[Exact affine-trend recovery]
\label{prop:affine-trend}
If
\[
z_t=a+bt,
\qquad t=0,\ldots,4,
\]
then
\[
\beta_2^1[z]=b.
\]
\end{proposition}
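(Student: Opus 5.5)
The plan is to substitute the affine signal directly into the closed form \eqref{eq:closed-form-f}, which gives \(\beta_2^1[z]=(z_3-z_1)/2\). With \(z_t=a+bt\) we have \(z_3-z_1=(a+3b)-(a+b)=2b\), so \(\beta_2^1[z]=b\). This is essentially a one-line verification once the closed form is accepted.

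For robustness, I would also check the result from the defining formula \eqref{eq:f1-discrete}, which shows why it holds for the partition rather than by numerical coincidence. Write \(z_t=(a+bc_2)+b(t-c_2)\). The numerator then splits as
\[
(a+bc_2)\sum_{t}(t-c_2)A_2(t)+b\sum_t(t-c_2)^2A_2(t).
\]
The first sum vanishes because the weights \(A_2=(0,\tfrac12,1,\tfrac12,0)\) are symmetric about \(c_2=2\). The second sum equals the denominator, which here is \(\tfrac12+\tfrac12=1\) and in particular is nonzero. Dividing yields \(b\).

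The only step needing care is this symmetry, or first-moment vanishing, of \(A_2\) about its node. It must be checked on the sampled positions rather than in the continuous setting, and it holds because the support points \(1,3\) carry equal weight \(\tfrac12\). I would note that the same argument shows the intercept is irrelevant, so the coefficient is invariant under adding constants to the signal.
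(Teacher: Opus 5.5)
Your proof is correct, and your primary argument is exactly the paper's: substitute $z_t=a+bt$ into the closed form $\beta_2^1[z]=(z_3-z_1)/2$ to get $2b/2=b$. Your second check from the defining formula is also correct: $\sum_t (t-c_2)A_2(t)=0$, and the denominator $\sum_t (t-c_2)^2A_2(t)$ equals $1$. It explains why the result holds for this partition, but it is not needed for the proof.
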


\begin{proof}
From \eqref{eq:closed-form-f},
\[
\beta_2^1[z]
=
\frac{z_3-z_1}{2}
=
\frac{(a+3b)-(a+b)}{2}
=
b.
\]
\end{proof}

The zero-degree coordinates therefore describe fuzzy-local levels,
while the centered first-degree coordinates recover the local slope
exactly for affine signals. Together with the rank characterization,
these properties give the descriptor a precise interpretation as a
stable, injective, fuzzy-local coordinate system for the observed
early contraction segment.

\section{Experimental Framework}
\label{sec:experimental-framework}

\subsection{Dataset generation and trajectory recording}

The experiments were conducted for
\[
\begin{aligned}
n\in\{&3,6,7,9,12,16,23,30,40,50,55,69,80,100,150,\\
&250,350,600,1054,1600,1700,2000\}.
\end{aligned}
\]
For each dimension, 1000 initial matrices were generated with entries
independently sampled from the uniform distribution on \([-1,1]\).
The same deterministic seeding convention was used throughout the
study to make the trajectory generation reproducible.

For every trajectory, the iterated Pearson correlation map was applied
and the Frobenius step sizes
\[
\delta_k=\|P_{k+1}-P_k\|_F
\]
were recorded. The primary convergence target remained the first
crossing of the tolerance \(10^{-6}\),
\begin{equation}\label{eq:experimental-target}
N_{\mathrm{conv}}
=
\min\{k\ge1:\delta_k<10^{-6}\}.
\end{equation}
Trajectory generation did not terminate at this first crossing.
Instead, genuine step sizes were retained through the early diagnostic
segment and the iteration was continued beyond the first threshold
crossing so that recrossings, sustained convergence, and nearby
tolerance levels could be evaluated. No uncomputed post-convergence
values were replaced by repeated or padded values.

The original operational limit of 100 iterations was retained as a
diagnostic reference, while the trajectory generator allowed
continuation up to 120 iterations when needed. Genuine values
\(\delta_1,\ldots,\delta_{12}\) were stored for the early-trajectory
analyses. After the first \(10^{-6}\) crossing, at least ten additional
updates were computed, subject to the diagnostic stopping conditions,
so that immediate threshold recrossings could be detected.

For numerical construction of the logarithmic signals, a common floor
\(\varepsilon=10^{-12}\) was used:
\begin{equation}\label{eq:q-epsilon}
\delta_k^{(\varepsilon)}
=
\max\{\delta_k,\varepsilon\},
\qquad
q_k
=
\log\delta_k^{(\varepsilon)}.
\end{equation}
The contraction-ratio channel was then formed from this same sequence,
\begin{equation}\label{eq:ratio-numerical}
\log\rho_k^{(\varepsilon)}
=
q_{k+1}-q_k.
\end{equation}
Using one common logged step-size sequence ensures numerical
consistency between the two channels.
Because both numerical channels are constructed from the same
\(q_k\) sequence, the linear representation
\(\Psi=M(q_2,\ldots,q_7)^T\) and the exact linear dependencies
established in Theorem~\ref{thm:rank-structure} apply unchanged to
the numerically implemented descriptor.

\subsection{Compared feature families}

The main experiments compare the proposed representation with raw,
statistical, and transformed representations of the same early
trajectory information.

The principal feature families are as follows.

\begin{enumerate}[leftmargin=2em]

\item \textbf{Step-size F-transform.}
The four coordinates
\[
(v_1,v_2,v_3,s_2)
\]
computed from
\[
q_2,\ldots,q_6.
\]

\item \textbf{Full two-channel F-transform.}
The eight coordinates
\[
(v_1,v_2,v_3,s_2,u_1,u_2,u_3,r_2)
\]
defined in Section~\ref{sec:representation}. The complete
construction uses the underlying segment
\[
q_2,\ldots,q_7.
\]

\item \textbf{Raw contraction-ratio samples.}
The five values
\[
q_3-q_2,\,
q_4-q_3,\,
q_5-q_4,\,
q_6-q_5,\,
q_7-q_6.
\]

\item \textbf{Raw two-channel representation.}
The five step-size values
\[
q_2,\ldots,q_6
\]
together with the five logarithmic contraction-ratio values above,
giving ten model inputs. As established in
Theorem~\ref{thm:rank-structure}, these ten channel values are
generated by the same six underlying quantities
\(q_2,\ldots,q_7\).

\item \textbf{Statistical summaries.}
For each of the two five-point channels, the mean, standard deviation,
minimum, maximum, range, linear slope, and intercept were computed,
giving fourteen model inputs.

\item \textbf{PCA-transformed raw representation.}
The ten raw two-channel coordinates were standardized and transformed
by PCA, with eight components supplied to the regression model. This
provides a nominally eight-dimensional transformed-coordinate
baseline for comparison with the eight F-transform coordinates.
Because the raw channels themselves are generated by six underlying
logged step sizes, retaining eight PCA coordinates does not imply
eight independent directions.

\end{enumerate}

Two additional controls were included to separate the effect of the
representation from the amount of trajectory information observed.

The first is the six-dimensional raw matched-segment representation
\[
(q_2,q_3,q_4,q_5,q_6,q_7),
\]
which exposes directly the same six underlying observations used by
the full F-transform descriptor.

The second augments the four-dimensional step-size F-transform by the
single additional value \(q_7\),
\[
(v_1,v_2,v_3,s_2,q_7).
\]
This control distinguishes the effect of observing the additional step
from the effect of organizing the same information through the
contraction-ratio channel.

The representation families and their underlying observed segments
are summarized in Table~\ref{tab:families}.

\begin{table}[ht]
\centering
\caption{Principal representation families used in the controlled benchmark.}
\label{tab:families}
\small
\begin{tabularx}{\textwidth}{L{0.29\textwidth} C{0.10\textwidth} C{0.21\textwidth} >{\raggedright\arraybackslash}X}
\toprule
Representation & Inputs & Observed segment & Role \\
\midrule
Step F-transform & 4 & \(q_2,\ldots,q_6\) & Step-size-channel fuzzy-local representation \\
Raw ratio only & 5 & \(q_2,\ldots,q_7\) & Direct contraction-ratio baseline \\
Step F-transform \(+q_7\) & 5 & \(q_2,\ldots,q_7\) & Observation-matched control for the step-size channel \\
Raw matched segment & 6 & \(q_2,\ldots,q_7\) & Direct coordinates of the same six underlying observations \\
Full F-transform & 8 & \(q_2,\ldots,q_7\) & Proposed structured fuzzy-local representation \\
PCA raw full & 8 & \(q_2,\ldots,q_7\) & Eight-component linear transformed-coordinate baseline \\
Raw step/ratio & 10 & \(q_2,\ldots,q_7\) & Unstructured two-channel coordinate baseline \\
Statistical full summary & 14 & \(q_2,\ldots,q_7\) & Engineered global summary baseline \\
\bottomrule
\end{tabularx}
\end{table}

\subsection{Prediction protocol and statistical comparisons}

Random Forest regression was used as a common downstream benchmark for
measuring how much information each representation retains about
\(N_{\mathrm{conv}}\). The purpose of this experiment is representation
comparison rather than optimization of a particular predictive model.

All models were fitted separately within each matrix dimension. For
each dimension, ten random 80/20 train--test partitions were generated
using split seeds
\[
0,1,\ldots,9.
\]
The same partition was used for every feature family at a given split.
Each Random Forest contained 300 trees, and the estimator random state
was fixed at 42 throughout the benchmark. Thus, variation across the
ten repetitions reflects variation in the train--test partition rather
than a simultaneous change in the fitted estimator's random seed.

Predictive performance was evaluated using the coefficient of
determination \(R^2\) and root mean squared error (RMSE). For each
matrix dimension and feature family, the ten split-level scores were
first averaged. Overall reported values were then obtained by
averaging these 22 dimension-level means, so that each tested matrix
dimension contributes equally to the aggregate result.

For the PCA predictive baseline, standardization and PCA were fitted
within each training set and then applied to the corresponding test
set before Random Forest regression. This prevents information from
the held-out observations from entering the fitted transformation.

Dimension-level mean \(R^2\) values were also compared pairwise. For a
pair of representations \(A\) and \(B\), the difference
\[
d_n
=
\overline{R^2}_{A,n}
-
\overline{R^2}_{B,n}
\]
was computed for each of the 22 dimensions. The analysis reports the
mean paired difference, a two-sided 95\% \(t\)-interval for the mean
difference, dimension-level win--loss counts, and a two-sided Wilcoxon
signed-rank test.

Six principal comparisons were evaluated:
the full F-transform against the raw matched segment, the
step-size F-transform augmented by \(q_7\) against the step-size
F-transform, the full F-transform against the \(q_7\)-augmented
step-size descriptor, and the full F-transform against the raw
two-channel, statistical-summary, and PCA baselines. The corresponding
Wilcoxon \(p\)-values were adjusted jointly using the Holm procedure.
These intervals and tests summarize the consistency of representation
differences across the 22 tested matrix dimensions; they are not
intended as inference to a randomly sampled population of matrix
dimensions.

Random Forest impurity-based feature importance was examined
separately for four feature families: the full F-transform, raw
two-channel representation, statistical summary, and step-size
F-transform. Feature importances were averaged first across the ten
splits within each dimension and then across the 22 dimensions.
Two-sided 95\% \(t\)-intervals across dimension-level means were used
to show variation in the importance estimates. Because the
F-transform coordinates satisfy exact linear dependencies,
Theorem~\ref{thm:rank-structure}, these values are interpreted as
model-dependent importance measures rather than as an intrinsic
mathematical ranking of the coordinates.

\subsection{Sensitivity and diagnostic analyses}

The baseline fuzzy construction uses a five-point segment, three
triangular nodes, bandwidth \(h=2\), three zero-degree local levels,
and one centered first-degree coefficient per channel. Several
controlled alternatives were evaluated to determine how strongly the
predictive benchmark depends on these design choices.

The number of fuzzy nodes was varied from the baseline three-node
construction to one and five nodes. Within the three-node construction,
triangular membership weights with nominal bandwidths \(h=1.5\),
\(2\), and \(2.5\) were compared, together with Gaussian and boxcar
alternatives. For these sensitivity variants, the sampled membership
weights were normalized pointwise across nodes before the local
coefficients were computed. In addition, a zero-degree-only
construction was evaluated to measure the effect of removing the
centered first-degree trend coefficient.

Sensitivity to the amount of early trajectory information was studied
using channel lengths
\[
L\in\{4,5,6\}.
\]
An \(L\)-point two-channel construction is generated by \(L+1\)
consecutive logged step sizes because the final contraction ratio
requires the next step-size value. Accordingly, the baseline \(L=5\)
construction uses
\[
q_2,\ldots,q_7,
\]
while the shorter and longer versions use, respectively,
\[
q_2,\ldots,q_6
\qquad\text{and}\qquad
q_2,\ldots,q_8.
\]
All sensitivity configurations were evaluated under the same ten-split
Random Forest protocol as the principal benchmark. Each alternative
was compared with the baseline construction using dimension-level
paired \(R^2\) differences and a Wilcoxon signed-rank test, with Holm adjustment across the full family of sensitivity
comparisons.

To distinguish fuzzy overlap from local aggregation itself, an
equal-dimensional non-fuzzy local-summary representation was also
constructed. For one five-point channel
\[
z=(z_0,z_1,z_2,z_3,z_4),
\]
the four coordinates are
\begin{equation}\label{eq:hard-local}
L_z=\frac{z_0+z_1}{2},
\qquad
C_z=\frac{z_1+z_2+z_3}{3},
\qquad
R_z=\frac{z_3+z_4}{2},
\qquad
S_z=\frac{z_3-z_1}{2}.
\end{equation}
Applying the same construction to both the step-size and
contraction-ratio channels gives eight inputs on the same underlying
segment \(q_2,\ldots,q_7\). This provides a direct comparison between
overlapping fuzzy-local weighting and a hard local-summary
construction with the same model-input count.

The convergence target itself was checked separately. First-crossing
times were recorded at tolerances
\[
10^{-5},\quad10^{-6},\quad10^{-7},\quad10^{-8},
\]
and compared with the first index at which three consecutive step
sizes remained below the same tolerance. At the primary tolerance
\(10^{-6}\), the trajectory records were also examined for any return
above threshold after the first crossing. Finally, the analysis
recorded whether the complete segment \(q_2,\ldots,q_7\) lay strictly
before the first \(10^{-6}\) crossing and whether either the original
100-step limit or the extended diagnostic limit was reached.

\subsection{Geometric organization}

The geometric analysis uses the standardized eight-dimensional
F-transform descriptor and is distinct from the PCA predictive
baseline described above. Standardization was performed separately
within each matrix dimension.

For PCA, all eight descriptor coordinates were retained when computing
the variance decomposition. The first two principal coordinates were
used for visualization. The dimension \(n=350\) was used for the representative visualization.
This choice is illustrative only; all quantitative PCA summaries
reported in the text are computed across the complete set of 22
matrix dimensions, and no inference depends on this representative
plot.

Clustering was performed directly in the standardized
eight-dimensional descriptor space rather than in the two-dimensional
PCA projection. \(K\)-means partitions were examined for
\[
k=2,3,\ldots,8.
\]
For the principal silhouette calculation, \(K\)-means used random
state 42 and 50 initializations. Cluster quality was measured by the
silhouette coefficient.

Partition reproducibility was evaluated by repeating \(K\)-means over
ten initialization seeds and computing pairwise adjusted Rand indices
between the resulting partitions. A permuted-feature reference was
constructed by independently permuting each standardized descriptor
coordinate, thereby preserving the marginal distribution of every
coordinate while disrupting their multivariate organization. Ten such
reference realizations were generated for each dimension and value of
\(k\).

Two additional checks were performed. First, the observed separation
of cluster mean convergence lengths was compared with a reference
obtained by permuting \(N_{\mathrm{conv}}\) while keeping the cluster
assignments fixed. Second, the descriptor coordinates were perturbed
with small Gaussian noise in standardized units at standard deviations
\[
0.01,\qquad0.05,\qquad0.10,
\]
and adjusted Rand indices relative to the unperturbed partitions were
used to quantify clustering robustness.

Together, the predictive, sensitivity, convergence, PCA, and
clustering analyses evaluate complementary properties of the same
early-trajectory coordinate system: retained convergence information,
dependence on design choices, validity of the convergence target, and
geometric organization.

\section{Results}\label{sec:results}

\subsection{Predictive information retained by the descriptor}
\label{sec:predictive-results}

Table~\ref{tab:overall} summarizes predictive performance for the
principal representation families under the ten-split protocol. The
fourteen-dimensional statistical summary and the ten-dimensional raw
two-channel representation give the highest aggregate mean \(R^2\),
with values of \(0.6535\) and \(0.6534\), respectively. The proposed
eight-dimensional F-transform descriptor follows closely with mean
\(R^2=0.6491\). The corresponding mean values for the
PCA-transformed raw representation and the four-dimensional
step-size F-transform are \(0.6205\) and \(0.5091\), respectively.

\begin{table}[ht]
\centering
\caption{Ten-split convergence-length benchmark for the principal feature representations. Each row aggregates 22 dimension-level means.}
\label{tab:overall}
\small
\begin{tabular}{lrrrr}
\toprule
Representation & Inputs & Mean \(R^2\) & Median \(R^2\) & Mean RMSE \\
\midrule
Statistical full summary & 14 & 0.6535 & 0.7703 & 0.9058 \\
Raw step/ratio & 10 & 0.6534 & 0.7688 & 0.9068 \\
Full F-transform & 8 & 0.6491 & 0.7553 & 0.9148 \\
PCA raw full & 8 & 0.6205 & 0.7355 & 0.9897 \\
Raw ratio only & 5 & 0.5791 & 0.6371 & 1.0436 \\
Step F-transform & 4 & 0.5091 & 0.6143 & 1.1119 \\
\bottomrule
\end{tabular}
\end{table}

The principal result is therefore not an accuracy advantage over the
larger raw or statistical feature sets. Rather, the eight-dimensional
F-transform representation retains nearly the same
convergence-related predictive information while expressing the
observed trajectory through explicit fuzzy-local level and trend
coordinates. Its aggregate differences from the raw two-channel and
statistical-summary representations are small:
\[
0.6491-0.6534=-0.0043,
\qquad
0.6491-0.6535=-0.0044.
\]
The corresponding dimension-level paired comparisons do not indicate
a systematic difference between these representations after
adjustment for the family of principal comparisons.

The comparison with the four-dimensional step-size F-transform is
substantially larger. The full two-channel construction raises the
mean \(R^2\) from \(0.5091\) to \(0.6491\). This comparison alone,
however, combines two effects: the full descriptor also uses
\(q_7\), which is required to form
\[
\rho_6=\frac{\delta_7}{\delta_6}.
\]
The observation-matched controls separate the effect of this
additional trajectory observation from the effect of its structured
two-channel representation.

\begin{table}[ht]
\centering
\caption{Observation-matched controls for the two-channel representation. Each row aggregates 22 dimension-level means over ten train--test splits.}
\label{tab:matchedcontrols}
\small
\begin{tabular}{lrrr}
\toprule
Representation & Inputs & Mean \(R^2\) & Mean RMSE \\
\midrule
Full F-transform & 8 & 0.6491 & 0.9148 \\
Raw matched segment & 6 & 0.6421 & 0.9257 \\
Step F-transform \(+q_7\) & 5 & 0.6402 & 0.9270 \\
Step F-transform & 4 & 0.5091 & 1.1119 \\
\bottomrule
\end{tabular}
\end{table}

Augmenting the step-size F-transform by \(q_7\) increases its mean
\(R^2\) from \(0.5091\) to \(0.6402\). The mean dimension-level
increase is \(0.1311\), occurs in all 22 tested dimensions, and
remains significant after Holm adjustment. A substantial part of the
difference between the four-coordinate step-size descriptor and the
full representation is therefore associated with observing the
additional trajectory value required by the complete two-channel
construction.

Once the observed segment is matched, the full F-transform descriptor
has mean \(R^2=0.6491\), compared with \(0.6421\) for the six raw
values \(q_2,\ldots,q_7\). The mean paired difference is \(0.0071\)
in favor of the F-transform, with higher dimension-level mean
\(R^2\) in 16 of the 22 dimensions. The corresponding 95\% confidence
interval includes zero, however, and the paired Wilcoxon comparison
is not significant after Holm adjustment. The two representations
therefore provide comparable predictive access to the same observed
trajectory information.

Similarly, the full descriptor exceeds the step-size F-transform
augmented by \(q_7\) by a mean \(R^2\) difference of \(0.0090\).
The full descriptor is higher in 13 dimensions and the augmented
step-size descriptor in 9, while the paired comparison is not
significant after adjustment. The contraction-ratio channel is
therefore best interpreted as an explicit representation of local
contraction behavior rather than as a source of additional independent
trajectory observations.

Table~\ref{tab:paired} gives the complete paired dimension-level
comparisons.

\begin{table}[ht]
\centering
\caption{Paired comparison of dimension-level mean \(R^2\) values. Differences are model A minus model B. Holm adjustment covers the six listed comparisons.}
\label{tab:paired}
\scriptsize
\resizebox{0.98\textwidth}{!}{%
\begin{tabular}{llccccc}
\toprule
Model A & Model B & Mean diff. & 95\% CI & Wins--losses & Raw \(p\) & Holm \(p\) \\
\midrule
Full F-transform & Raw matched segment & 0.0071 & \([-0.0017,0.0159]\) & 16--6 & 0.1207 & 0.3936 \\
Step F \(+q_7\) & Step F-transform & 0.1311 & \([0.1015,0.1607]\) & 22--0 & $4.77\times 10^{-7}$ & $2.86\times 10^{-6}$ \\
Full F-transform & Step F \(+q_7\) & 0.0090 & \([-0.0011,0.0190]\) & 13--9 & 0.0984 & 0.3936 \\
Full F-transform & Raw step/ratio & -0.0043 & \([-0.0104,0.0019]\) & 7--15 & 0.1375 & 0.3936 \\
Full F-transform & Statistical full summary & -0.0044 & \([-0.0102,0.0014]\) & 7--15 & 0.1207 & 0.3936 \\
Full F-transform & PCA raw full & 0.0286 & \([0.0085,0.0487]\) & 17--5 & 0.0083 & 0.0416 \\
\bottomrule
\end{tabular}%
}
\end{table}

Figure~\ref{fig:performance-size} shows predictive performance across
the tested matrix dimensions for the six principal representation
families. Performance varies with dimension for all representations,
but the full F-transform remains close to the raw two-channel and
statistical-summary baselines over much of the tested range. The
step-size-only F-transform is consistently weaker across most
dimensions.

The raw contraction-ratio representation also retains substantial
predictive information by itself, showing that the local contraction
behavior described by \(\rho_k\) is strongly associated with
convergence length. This should be distinguished from
statistical independence: the contraction-ratio values are
algebraically determined by consecutive logged step sizes, as shown
in Section~\ref{sec:representation}.

The logarithmic horizontal axis in
Figure~\ref{fig:performance-size} is used only to display the tested
dimensions clearly because they are concentrated in the lower part of
the range while extending to \(n=2000\). All aggregate numerical
results give equal weight to each of the 22 matrix dimensions.

\begin{figure}[!htbp]
\centering

\begin{subfigure}[t]{0.48\textwidth}
\centering
\includegraphics[width=\linewidth]
{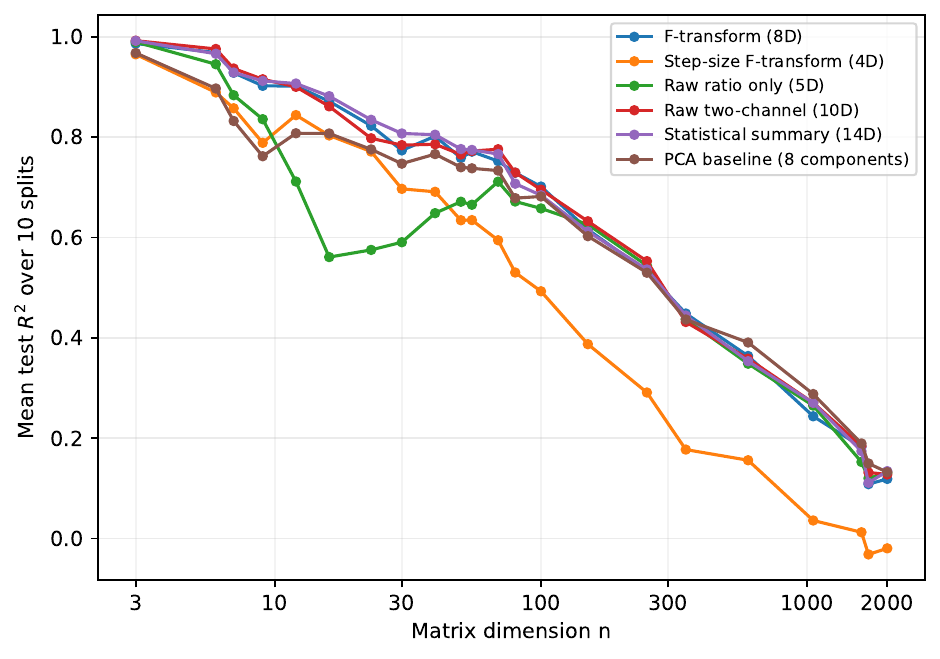}
\caption{\(R^2\) across matrix sizes.}
\label{fig:r2-vs-size}
\end{subfigure}
\hfill
\begin{subfigure}[t]{0.48\textwidth}
\centering
\includegraphics[width=\linewidth]
{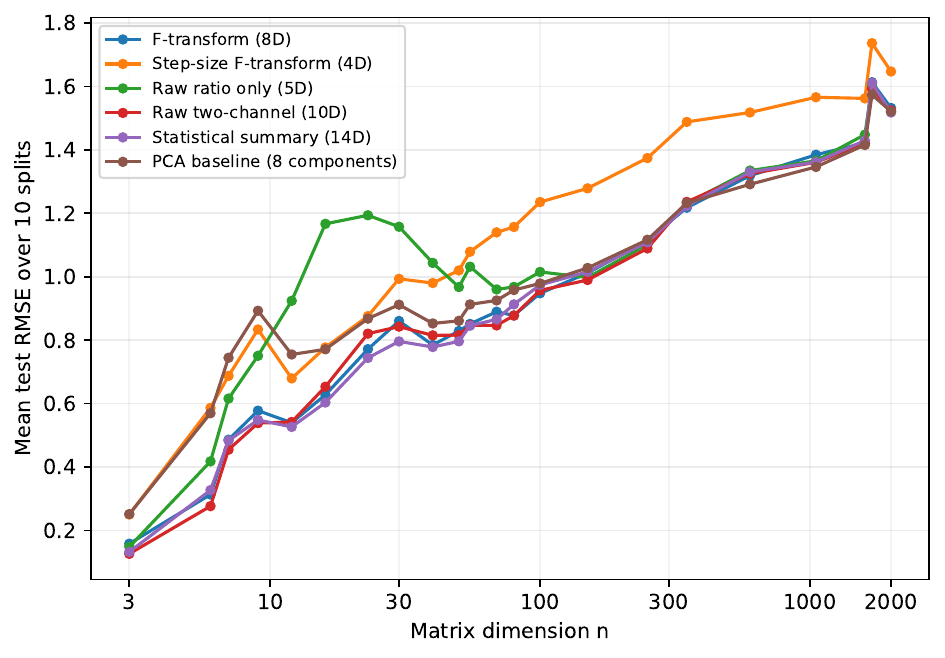}
\caption{RMSE across matrix sizes.}
\label{fig:rmse-vs-size}
\end{subfigure}

\caption{Predictive performance across matrix sizes for the principal
representation families. The horizontal axis is logarithmic to display
the tested dimensions clearly across the full range
\(3\le n\le2000\).}
\label{fig:performance-size}
\end{figure}
\subsection{Feature importance}
\label{sec:feature-importance}

Random Forest feature importance was examined to characterize how the
fitted predictive models distribute importance among the coordinates
of the different representation families. Figure~\ref{fig:feature-importance}
shows the mean impurity-based importance across the 22 matrix
dimensions for the full F-transform descriptor, the raw two-channel
representation, the statistical-summary representation, and the
step-size F-transform. For each dimension, importances were first
averaged over the ten train--test splits; the figure then summarizes
their variation across dimensions.

For the full eight-dimensional F-transform descriptor, the largest
mean importances are associated with \(u_3\) and \(v_3\), with mean
values of approximately \(0.340\) and \(0.336\), respectively.
The next largest contribution is \(v_2\), with mean importance
approximately \(0.102\). The remaining coordinates have smaller mean
importances, including \(r_2\) at approximately \(0.053\), while all
eight coordinates receive nonzero importance in the fitted models.

This pattern indicates that predictive information is distributed
across both channels rather than being concentrated exclusively in
the step-size coordinates. In particular, \(u_3\) represents the
right-side local level of the contraction-ratio channel, whereas
\(v_3\) represents the corresponding local level of the step-size
channel. Their prominence is consistent with the predictive relevance
of the later part of the observed early segment seen in the
observation-matched comparisons.

The numerical ranking should nevertheless be interpreted as a property
of the fitted Random Forest models rather than as an intrinsic ranking
of the mathematical coordinates. As shown in
Theorem~\ref{thm:rank-structure}, the linear map generating the eight
F-transform coordinates has rank six, and the coordinates satisfy two
exact linear dependencies. In the
presence of correlated or linearly dependent predictors, impurity-based
importance can be shared or redistributed among coordinates that carry
related information. The figure is therefore used to describe how the
predictive model exploits the representation, not to assign unique
independent contributions to individual F-transform coordinates.

The comparison across feature families in
Figure~\ref{fig:feature-importance} also shows that the importance
structure depends on the representation supplied to the model. Raw
samples, statistical summaries, and fuzzy-local coordinates expose the
same early trajectory information in different forms, and the fitted
Random Forest consequently distributes importance differently across
their respective inputs.

\begin{figure}[!htbp]
\centering
\includegraphics[width=0.90\textwidth]
{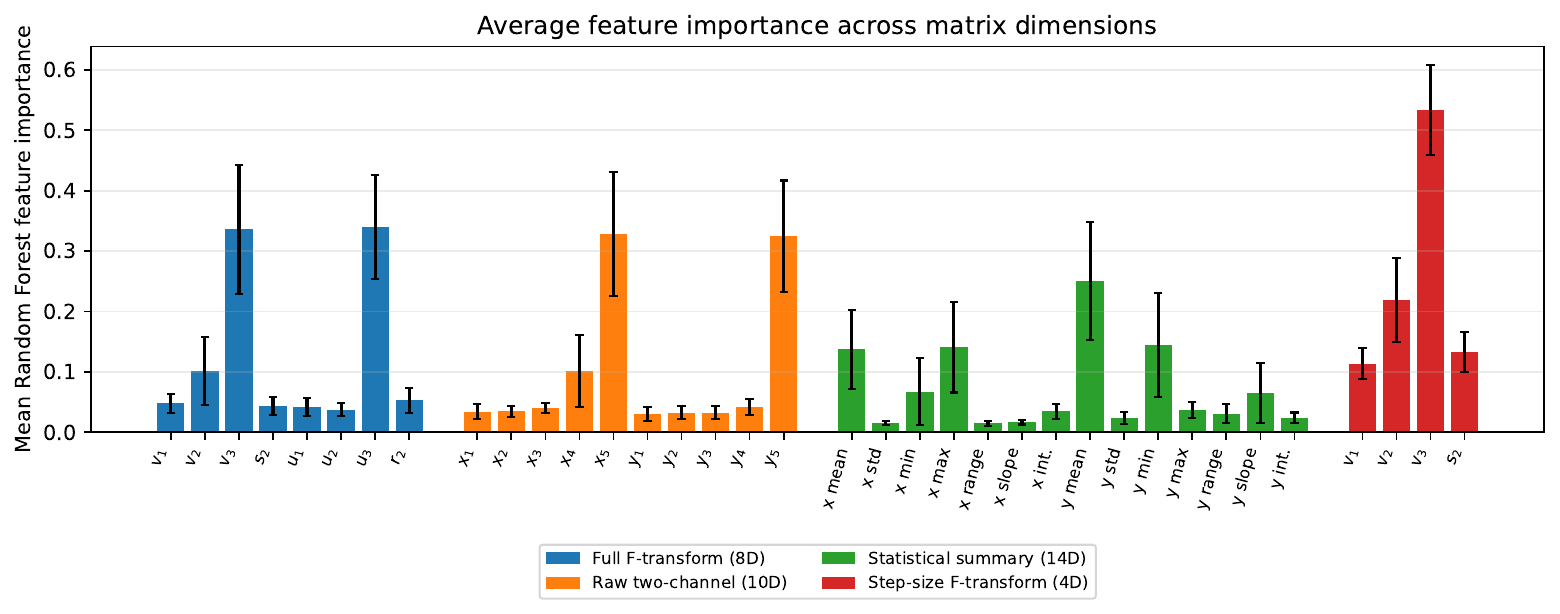}
\caption{Mean Random Forest feature importance across the 22 matrix
dimensions for selected representation families. Error bars show
95\% confidence intervals across dimension-level mean importances.
The importances are model-dependent and are interpreted within each
representation family.}
\label{fig:feature-importance}
\end{figure}

\subsection{Stability across repeated train--test partitions}
\label{sec:robustness}

The ten-split protocol used in the principal benchmark makes
train--test partition stability part of the main evaluation rather
than a separate secondary experiment. To quantify this variability,
the standard deviation of test \(R^2\) across the ten partitions was
computed for each representation within each matrix dimension, and
these within-dimension standard deviations were then averaged over the
22 tested dimensions.

For the full F-transform descriptor, the mean within-dimension
standard deviation of \(R^2\) is \(0.0351\). This is comparable to
the corresponding values for the statistical-summary representation
\((0.0331)\) and the raw two-channel representation \((0.0355)\).
The PCA-transformed raw representation has mean within-dimension
standard deviation \(0.0394\), while the step-size F-transform has
the largest variability among these principal representations,
with a value of \(0.0481\).

The observation-matched controls show a similar scale of variability.
The six-dimensional raw representation
\((q_2,\ldots,q_7)\) has mean within-dimension standard deviation
\(0.0418\), while the step-size F-transform augmented by \(q_7\)
has value \(0.0397\).

These results show that the predictive performance of the full
F-transform descriptor is not concentrated in a particular
train--test partition. Its split-to-split variability is of the same
order as that of the strongest larger baselines and is lower than that
of the step-size-only descriptor.

Because the same ten partitions are used for all feature families,
comparisons between representations are paired with respect to the
training and test data. The dimension-level comparisons reported in
Table~\ref{tab:paired} are based on the corresponding ten-split mean
\(R^2\) values and therefore summarize performance across repeated
partitions rather than a single data split.

\subsection{Sensitivity to representation design and observed segment}
\label{sec:sensitivity}

The baseline descriptor uses a five-point signal in each channel, a
three-node triangular fuzzy partition with bandwidth \(h=2\), three
zero-degree coefficients, and one centered first-degree coefficient.
Table~\ref{tab:sensitivity} examines how the predictive benchmark
changes when these choices are varied.

\begin{table}[ht]
\centering
\caption{Sensitivity of the convergence-length benchmark to fuzzy configuration and observed segment. Holm-adjusted \(p\)-values compare each alternative with the baseline three-node triangular \(L=5\) construction.}
\label{tab:sensitivity}
\scriptsize
\resizebox{0.98\textwidth}{!}{%
\begin{tabular}{lccccc}
\toprule
Configuration & Inputs & Observed segment & Mean \(R^2\) & Mean diff. & Holm \(p\) \\
\midrule
Baseline: triangular, 3 nodes, \(h=2\), \(L=5\) & 8 & \(q_2,\ldots,q_7\) & 0.649124 & -- & -- \\
1 node & 4 & \(q_2,\ldots,q_7\) & 0.638188 & -0.010936 & 0.2344 \\
5 nodes & 12 & \(q_2,\ldots,q_7\) & 0.652855 & 0.003731 & 0.4587 \\
Triangular, \(h=1.5\) & 8 & \(q_2,\ldots,q_7\) & 0.649124 & 0.000000 & 1.0000 \\
Triangular, \(h=2.5\) & 8 & \(q_2,\ldots,q_7\) & 0.649932 & 0.000808 & 1.0000 \\
Gaussian, 3 nodes & 8 & \(q_2,\ldots,q_7\) & 0.651506 & 0.002382 & 0.7479 \\
Boxcar, 3 nodes & 8 & \(q_2,\ldots,q_7\) & 0.649124 & 0.000000 & 1.0000 \\
Zero-degree levels only & 6 & \(q_2,\ldots,q_7\) & 0.641298 & -0.007826 & 0.0891 \\
Shorter segment, \(L=4\) & 8 & \(q_2,\ldots,q_6\) & 0.533623 & -0.115501 & $7.63\times 10^{-6}$ \\
Longer segment, \(L=6\) & 8 & \(q_2,\ldots,q_8\) & 0.745842 & 0.096718 & $4.29\times 10^{-6}$ \\
\bottomrule
\end{tabular}%
}
\end{table}

The baseline construction gives mean \(R^2=0.6491\). Variations that
retain the same observed segment \(q_2,\ldots,q_7\) produce relatively
small changes in predictive performance. Increasing the number of
fuzzy nodes from three to five gives mean \(R^2=0.6529\), while using
a single node gives \(0.6382\). On the five sampled positions, pointwise normalization makes the
triangular \(h=1.5\) and boxcar configurations coincide exactly with
the baseline sampled weights, so their identical benchmark values are
a discrete-grid identity rather than independent evidence of
robustness. The distinct triangular \(h=2.5\) and Gaussian
configurations also remain close to the baseline. None of these same-segment alternatives differs
significantly from the baseline after Holm adjustment.

Removing the first-degree trend coefficient has a somewhat larger
effect, reducing the mean \(R^2\) to \(0.6413\), although this
difference also does not remain significant after adjustment. Thus,
among the genuinely distinct same-segment alternatives tested, the
predictive result is not strongly tied to one precise choice of node
number, effective bandwidth, membership weighting, or retained
F-transform order.

A substantially larger effect appears when the amount of observed
trajectory information is changed. Shortening the construction to
\(L=4\), which uses the underlying segment
\[
q_2,\ldots,q_6,
\]
reduces the mean \(R^2\) to \(0.5336\), a mean difference of
\(-0.1155\) relative to the baseline. Extending the construction to
\(L=6\), which uses
\[
q_2,\ldots,q_8,
\]
increases the mean \(R^2\) to \(0.7458\), a mean difference of
\(0.0967\). Both changes remain significant after Holm adjustment.

The dominant sensitivity is therefore to the amount of early
trajectory information made available to the representation rather
than to the genuinely distinct nearby fuzzy-design alternatives tested
on the same observed segment. In particular,
the strong performance increase for the longer segment should not be
interpreted as evidence that the \(L=6\) fuzzy construction is
intrinsically superior to the baseline. It observes one additional
trajectory value and consequently has access to later dynamical
information. Conversely, the \(L=4\) representation is required to
operate from less information.

This distinction is consistent with the observation-matched results
in Section~\ref{sec:predictive-results}. The predictive benchmark is
sensitive to how far the early trajectory has been observed, while
different structured representations of the same observed segment
show considerably smaller differences.

\paragraph{Fuzzy-local and hard-local summaries.}

To examine the role of overlapping fuzzy weighting more directly, the
full F-transform descriptor was compared with the eight-dimensional
hard local-summary representation defined in
\eqref{eq:hard-local}. Both representations use the same underlying
segment \(q_2,\ldots,q_7\), both produce eight model inputs, and both
encode local levels together with a centered trend quantity.

\begin{table}[ht]
\centering
\caption{Equal-dimensional comparison between the proposed fuzzy-local coordinates and a hard local-summary construction on the same observed segment \(q_2,\ldots,q_7\).}
\label{tab:localbaseline}
\small
\begin{tabular}{lcccc}
\toprule
Representation & Inputs & Mean \(R^2\) & Mean diff. vs. F & Raw paired \(p\) \\
\midrule
Full F-transform & 8 & 0.649124 & -- & -- \\
Hard local summary & 8 & 0.646238 & -0.002886 & 0.1465 \\
\bottomrule
\end{tabular}
\end{table}

The hard local-summary representation gives mean \(R^2=0.6462\),
compared with \(0.6491\) for the F-transform descriptor. The mean
dimension-level difference, defined as hard local minus F-transform,
is
\[
-0.0029,
\]
with 95\% confidence interval
\[
[-0.0074,\;0.0017].
\]
The hard-local representation is higher in 7 dimensions and the
F-transform is higher in 15, while the paired Wilcoxon test gives
\(p=0.1465\).

The predictive benchmark therefore does not show a systematic
accuracy difference between the two local-summary constructions.
Much of the retained convergence information can be attributed to the
local level--trend organization itself. The F-transform provides this
organization through overlapping fuzzy supports and retains the
explicit fuzzy-local interpretation and stability properties
established in Section~\ref{sec:representation}, but the present
regression experiment does not indicate a separate predictive
advantage attributable solely to fuzzy weighting.
\subsection{Convergence-target diagnostics}
\label{sec:convergence-diagnostics}

The predictive experiments use the first \(10^{-6}\) crossing of the
step-size sequence as the convergence target,
\[
N_{\mathrm{conv}}
=
\min\{k\ge1:\delta_k<10^{-6}\}.
\]
To examine the stability of this definition, the complete set of
22,000 trajectories was evaluated for threshold recrossing, sustained
convergence, iteration-cap effects, nearby tolerance levels, and the
position of the descriptor's observed segment relative to the
convergence threshold.

\begin{table}[ht]
\centering
\caption{Convergence-target diagnostics over 22,000 trajectories.}
\label{tab:convdiag}
\small
\begin{tabularx}{0.92\textwidth}{X C{0.28\textwidth}}
\toprule
Diagnostic & Result \\
\midrule
Trajectories with a \(10^{-6}\) recrossing & 0/22000 \\
Total \(10^{-6}\) recrossing events & 0 \\
Trajectories reaching the 100-step operational cap & 0/22000 \\
Trajectories reaching the extended diagnostic cap & 0/22000 \\
Maximum last step required by the diagnostic computation & 39 \\
Observed segment \(q_2,\ldots,q_7\) strictly pre-threshold & 20842/22000 (94.74\%) \\
First crossing = three-consecutive crossing at \(10^{-5}\) & 22000/22000 \\
First crossing = three-consecutive crossing at \(10^{-6}\) & 22000/22000 \\
First crossing = three-consecutive crossing at \(10^{-7}\) & 22000/22000 \\
First crossing = three-consecutive crossing at \(10^{-8}\) & 22000/22000 \\
\bottomrule
\end{tabularx}
\end{table}

No trajectory returned above \(10^{-6}\) after its first crossing.
Moreover, the first threshold crossing coincided with the first
crossing sustained for three consecutive steps for all 22,000
trajectories. The same agreement was observed at
\[
10^{-5},\qquad 10^{-6},\qquad 10^{-7},\qquad 10^{-8}.
\]
Thus, within the generated trajectory ensemble, the first-crossing
definition and the stricter three-consecutive-step definition produce
the same convergence index.

The iteration limits do not affect the recorded target. None of the
22,000 trajectories reached either the original 100-step operational
limit or the extended diagnostic limit, and the largest step required
by the diagnostic computation was 39. Consequently, the observed
convergence-length distribution is not truncated by the iteration cap.

The complete underlying segment \(q_2,\ldots,q_7\) lies strictly
before the first \(10^{-6}\) crossing for
\[
20\,842\ \text{of}\ 22\,000
\]
trajectories, corresponding to \(94.74\%\) of the sample. For the
remaining 1,158 trajectories, the convergence threshold is reached
within or before the end of this observed segment. These cases are
retained because the descriptor is constructed from genuine computed
trajectory values and no post-threshold padding is used.

The convergence index is also stable under nearby tolerance choices.
The mean first-crossing iterations are approximately
\[
13.66,\quad 13.84,\quad 14.00,\quad 14.15
\]
for tolerances
\[
10^{-5},\quad10^{-6},\quad10^{-7},\quad10^{-8},
\]
respectively. Tightening the tolerance over three orders of magnitude
therefore changes the mean crossing time by less than one iteration.
Dimension-specific summaries of the convergence target, including
means, standard deviations, medians, quartiles, minima, and maxima,
are provided in the archived reproducibility outputs.

Together, these diagnostics show that the convergence-length target
used in the predictive experiments is not sensitive to immediate
threshold recrossing, the distinction between first and sustained
crossing, the iteration cap, or modest changes in the numerical
tolerance.
\subsection{Clustering structure and stability}
\label{sec:clustering}

The standardized F-transform descriptor exhibits reproducible coarse
organization under \(k\)-means clustering. Figure~\ref{fig:clustering}
summarizes cluster separation and initialization stability for
\[
k=2,\ldots,8.
\]

The mean silhouette coefficient is highest for \(k=2\), with value
\(0.353\), followed by \(k=3\) with \(0.288\). The mean silhouette
then decreases gradually as the number of clusters increases. The
\(k=2\) partition gives the highest silhouette in 20 of the 22 tested
matrix dimensions. Across the tested range, the observed mean
silhouette remains clearly above the corresponding permuted-feature
reference. In particular, for both \(k=2\) and \(k=3\), the observed
silhouette exceeds the 95th percentile of the corresponding
permuted-feature reference in all 22 dimensions.

The stability analysis gives a consistent picture. Repeated
\(k\)-means fits with different initialization seeds yield a mean
pairwise adjusted Rand index of approximately \(0.988\) for \(k=2\)
and \(0.970\) for \(k=3\). Stability decreases gradually for finer
partitions but remains high for smaller numbers of clusters. Thus, the
strongest evidence is for a stable two-group organization, while
\(k=3\) provides a reproducible finer partition of the same descriptor
space.

The association between clustering and convergence length was examined
separately by comparing the observed range of cluster mean
\(N_{\mathrm{conv}}\) values with a reference obtained by permuting the
convergence target while keeping the cluster assignments fixed. For
both \(k=2\) and \(k=3\), the observed separation exceeds the 95th
percentile of this reference in all 22 matrix dimensions. The
resulting coarse partitions therefore organize trajectories in a way
that is associated with measurable differences in convergence length.

The clustering is also stable under small perturbations of the
descriptor coordinates. For \(k=2\), adding independent Gaussian
noise in standardized coordinates with standard deviations
\(0.01\), \(0.05\), and \(0.10\) gives mean adjusted Rand indices
relative to the unperturbed partition of approximately
\(0.985\), \(0.961\), and \(0.942\), respectively. The corresponding
values for \(k=3\) are approximately \(0.977\), \(0.949\), and
\(0.900\). Thus, the strongest \(k=2\) partition remains highly
stable under the tested perturbations, while the reproducible
\(k=3\) subdivision shows the same gradual robustness pattern.

\begin{figure}[!htbp]
\centering
\includegraphics[width=0.95\textwidth]
{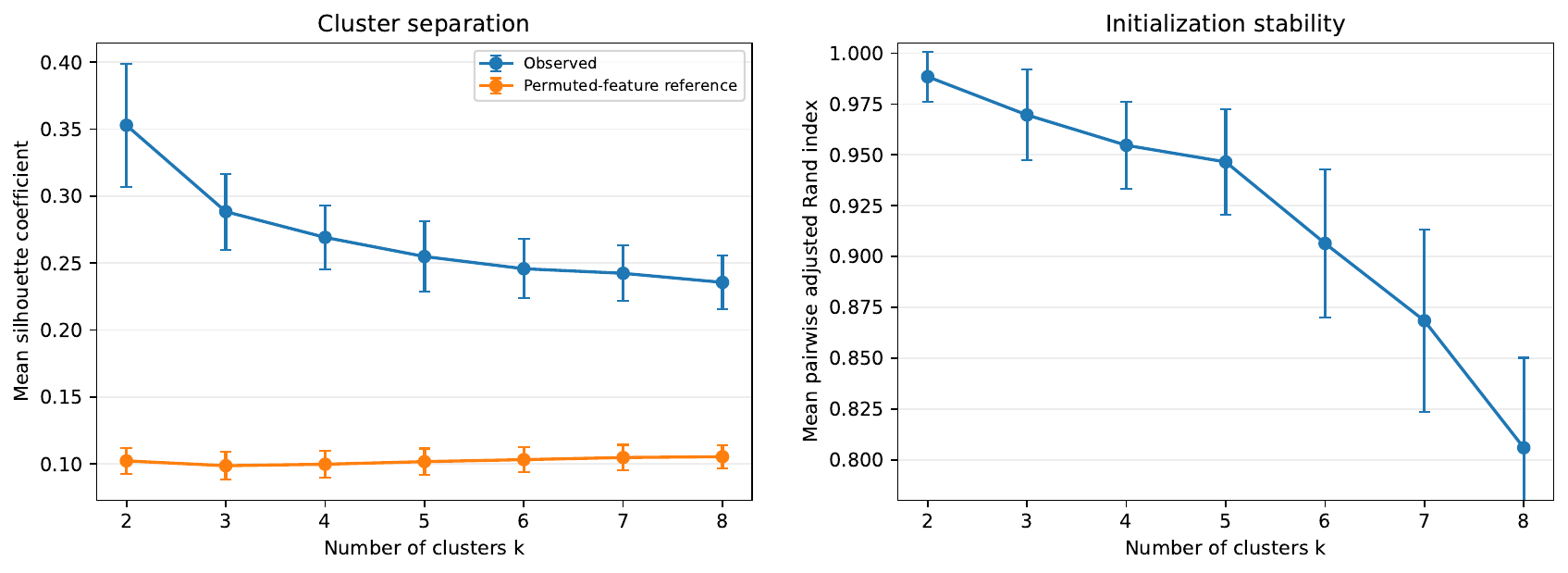}
\caption{Clustering structure of the standardized F-transform
descriptor across \(k=2,\ldots,8\). Left: mean silhouette coefficient
across the 22 matrix dimensions compared with a permuted-feature
reference. Right: mean pairwise adjusted Rand index across repeated
\(k\)-means initializations. Error bars show 95\% confidence intervals
across matrix dimensions.}
\label{fig:clustering}
\end{figure}

These results support a stable coarse organization of the descriptor
space, with the strongest separation at \(k=2\). They do not indicate
a unique discrete taxonomy of the dynamics; rather, the clustering
provides an exploratory description of how early contraction
trajectories are organized in the eight-dimensional representation.

\subsection{Low-dimensional concentration}
\label{sec:pca}

PCA of the standardized eight-dimensional F-transform descriptor
shows substantial concentration along a small number of dominant
directions. Across the 22 matrix dimensions, the first principal
component explains on average \(56.33\%\) of the variance and the
second explains \(28.14\%\). Together, the first two components
account for an average of
\[
84.47\%
\]
of the descriptor variance. The corresponding two-component total
ranges from \(78.43\%\) to \(90.09\%\) across the tested dimensions.

This concentration is consistent with the exact algebraic structure
established in Theorem~\ref{thm:rank-structure}. Although the
descriptor is represented by eight coordinates, these coordinates are
generated by six underlying logged step sizes and satisfy two exact
linear dependencies. Accordingly, the seventh and eighth principal
components carry only numerical-level variance. The PCA result goes
further than this exact rank reduction: within the six-dimensional
information space, most empirical variation is concentrated in the
first two principal directions.

For the representative dimension \(n=350\), the first principal
component explains \(55.16\%\) of the variance and the second explains
\(31.58\%\), for a combined total of \(86.73\%\).
Figure~\ref{fig:pca} shows the corresponding two-dimensional
projection, with trajectories grouped into empirical fast, medium,
and slow convergence-length bands.

\begin{figure}[!htbp]
\centering
\includegraphics[width=0.85\textwidth]
{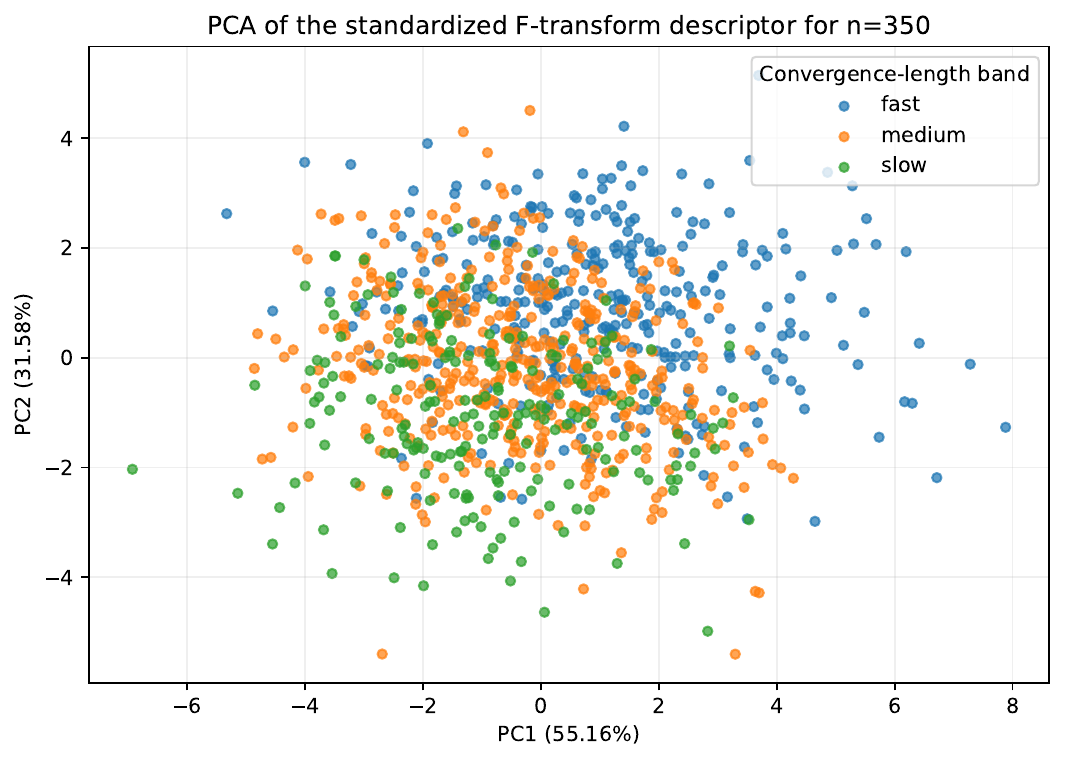}
\caption{PCA projection of the standardized F-transform descriptor
for \(n=350\), colored by empirical convergence-length band. The
first two principal components explain \(86.73\%\) of the variance
for this representative dimension.}
\label{fig:pca}
\end{figure}

The projection exhibits a broad ordering with respect to convergence
length rather than sharply separated classes. Faster and slower
trajectories tend to occupy different regions of the projected
descriptor space, while the medium-convergence trajectories overlap
with both. This organization is consistent with the clustering
results in Section~\ref{sec:clustering}, where the strongest evidence
is for a coarse partition rather than a unique discrete taxonomy.

Taken together, the PCA and clustering analyses indicate that the
descriptor is not merely a collection of unrelated coordinates. Its variation is strongly concentrated in a low-dimensional empirical
geometry, and that geometry is associated with differences in
convergence length.

\section{Discussion}\label{sec:discussion}

\subsection{What the descriptor contributes}

The proposed descriptor is best understood as a structured fuzzy
coordinate system for early nonlinear trajectory information. Its
main contribution is not a universal improvement in predictive
accuracy over raw or statistical representations. The empirical
results instead show that the descriptor retains nearly the same
convergence-related predictive information as larger feature families
while expressing the observed trajectory through explicit local
levels and trends in two dynamically meaningful channels.

The exact structure established in
Theorem~\ref{thm:rank-structure} sharpens this interpretation. The
eight coordinates satisfy
\[
\Psi=M(q_2,\ldots,q_7)^T,
\qquad
\operatorname{rank}M=6.
\]
Thus, the descriptor is an injective, overcomplete representation of
the six logged step sizes underlying the two channels. Passing from
\((q_2,\ldots,q_7)\) to \(\Psi\) therefore loses no information from
the observed segment. What changes is the coordinate organization:
the raw sequence is replaced by overlapping fuzzy-local levels and
centered trend coordinates for step size and contraction ratio.

This distinction is important when comparing the descriptor with the
raw and statistical baselines. Raw samples preserve the observed
trajectory values directly, but their local level and trend structure
remain implicit. Statistical summaries provide concise global
quantities but suppress temporal locality. The F-transform descriptor
occupies a different position: it preserves the information in the
six-point observed segment while reorganizing it into coordinates with
an explicit local dynamical interpretation.

The predictive results are consistent with this role. The full
descriptor achieves mean \(R^2=0.6491\), close to the raw
two-channel and statistical-summary representations, while the
observation-matched raw segment gives \(R^2=0.6421\). The paired
comparisons do not establish a systematic predictive advantage of the
F-transform over the matched raw coordinates. The empirical value of
the representation therefore lies in retaining comparable predictive
access while imposing a transparent fuzzy-local coordinate structure.

\subsection{Role of the contraction-ratio channel}

The contraction-ratio channel remains a central part of the
representation, but its role is more precise than an interpretation
based on independent additional information. Since
\[
\log\rho_k=q_{k+1}-q_k,
\]
the ratio channel is algebraically determined by the same logged
step-size sequence that underlies the complete descriptor. It does
not introduce new independent trajectory observations beyond
\(q_2,\ldots,q_7\).

Its value is instead representational. The step-size channel describes
the local magnitude and trend of the updates, whereas the
contraction-ratio channel makes the local evolution of that magnitude
explicit. The two channels therefore provide different coordinate
views of the same observed contraction segment.

The observation-matched experiments clarify this distinction. Adding
\(q_7\) to the step-size F-transform produces a large increase in
predictive performance, showing that access to the additional observed
step accounts for a substantial part of the difference between the
four-coordinate and full descriptors. Once the observed segment is
matched, the remaining difference between the full two-channel
descriptor and the \(q_7\)-augmented step-size descriptor is small.
The contraction-ratio coordinates should therefore be interpreted as
a structured representation of local contraction behavior rather than
as evidence of additional independent degrees of freedom.

The feature-importance analysis is consistent with this interpretation.
Coordinates from both channels are used by the fitted Random Forest
models, with \(u_3\) and \(v_3\) receiving the largest mean
importance under the ten-split protocol. Because the descriptor contains
exact linear dependencies, however, these importance values describe
the behavior of the fitted model rather than unique independent
contributions of individual coordinates.

\subsection{Fuzzy-local representation and design sensitivity}

The sensitivity experiments show that the baseline predictive result
is not unusually fragile to the genuinely distinct nearby
fuzzy-design alternatives tested. Changes in node number, effective
bandwidth, membership weighting, and retained F-transform order
produce only modest differences when the same underlying trajectory
segment is retained. Removing the centered first-degree term also changes
performance less than changing the amount of observed trajectory
information.

The stronger sensitivity to segment length is consistent with the
observation-matched benchmark. Extending the observed segment from
\(q_2,\ldots,q_7\) to \(q_2,\ldots,q_8\) produces a substantial
increase in predictive performance, whereas shortening it to
\(q_2,\ldots,q_6\) produces a substantial decrease. Predictive information therefore depends more strongly on how far the
early trajectory has been observed than on the genuinely distinct nearby
fuzzy-design alternatives tested on the same observed segment.

The comparison with the equal-dimensional hard local-summary
representation gives a complementary result. The hard-local
construction and the F-transform use the same observed segment, the
same number of model inputs, and a similar local level--trend
organization. The paired comparison does not detect a significant performance
difference between them in the present benchmark. This suggests that
much of the retained convergence information is associated with the
local organization itself rather than with fuzzy overlap alone.

The F-transform nevertheless provides additional structural
properties that are intrinsic to the representation rather than to
the downstream regression experiment. Its coordinates arise from an
explicit fuzzy partition, adjacent local summaries share information
through overlapping supports, the centered first-degree coefficient
recovers affine trends exactly, and the complete descriptor satisfies
the Lipschitz stability property established in
Proposition~\ref{prop:stability}. These properties distinguish the
construction from an arbitrary collection of local summary
statistics even when their predictive performance is similar.

\subsection{Why early trajectory coordinates are useful}

A fixed coordinate representation of an early trajectory is useful
when complete matrix trajectories are expensive to retain, when many
runs must be compared across dimensions or initial conditions, or
when geometric and statistical tools require a common finite-dimensional
representation.

In the present setting, the original states \(P_k\) are
\(n\times n\) matrices whose dimension changes with \(n\), whereas
the descriptor has a fixed eight-coordinate form for every tested
matrix size. This permits direct use of the same downstream tools
across dimensions and provides coordinates that can be interpreted in
terms of step-size level, step-size trend, contraction-ratio level,
and contraction-ratio trend.

For CONCOR-type and related repeated-correlation procedures, such
coordinates can support comparison, indexing, visualization, and
early characterization of convergence behavior without treating the
entire matrix trajectory as the representation object. The present
experiments use controlled random matrix ensembles rather than
empirical network datasets, so this operational interpretation is a
methodological motivation rather than a demonstrated application to
network blockmodeling.

The construction can also be viewed as a form of fuzzy information
granulation. The early contraction sequence is represented through
overlapping local summaries that preserve the observed information
while reorganizing it into interpretable granules of local level and
trend. In this sense, the descriptor connects the trajectory problem
with broader ideas in fuzzy information granulation
\cite{PedryczGranulation,ZadehGranulation}.

\subsection{Geometric interpretation}

The PCA and clustering analyses provide a geometric view complementary
to the predictive benchmark. Across the 22 matrix dimensions, the
first two principal components explain on average \(84.47\%\) of the
variance of the standardized descriptor. This concentration occurs
within the six-dimensional information space implied by the exact rank
of the descriptor and shows that the empirical variation is still more
concentrated than the algebraic rank alone requires.

The clustering analysis indicates a stable coarse organization of this
space. The strongest mean silhouette occurs at \(k=2\), while the
\(k=3\) partitions are also highly reproducible across initialization
seeds. Observed silhouette values remain well above the
permuted-feature reference over the tested range
\(k=2,\ldots,8\), and the cluster partitions are associated with
measurable differences in convergence length.

These results are better interpreted as evidence of structured
geometry than as a unique classification of trajectories into a fixed
number of dynamical regimes. The PCA projection similarly shows a
broad ordering of faster and slower trajectories rather than sharply
separated convergence classes. Prediction, PCA, and clustering thus
describe different aspects of the same representation: retained
information about convergence, concentration of variation, and coarse
organization of the trajectory coordinates.

\subsection{Convergence target and numerical reliability}

The convergence diagnostics support the use of the first
\(10^{-6}\) threshold crossing as an operational convergence length
for the trajectory ensemble studied here. No recrossings were observed
after the first threshold crossing, and the first crossing coincided
with the first three-consecutive-step crossing at all four tested
tolerances.

The target is also insensitive to the iteration cap: no trajectory
reached either the original 100-step limit or the extended diagnostic
limit. Tightening the threshold from \(10^{-5}\) to \(10^{-8}\)
changes the mean crossing time by less than one iteration.

Most observed segments are strictly pre-threshold:
\(94.74\%\) of the trajectories satisfy this condition for the full
segment \(q_2,\ldots,q_7\). The remaining trajectories are retained
because all descriptor coordinates are computed from genuine trajectory
values rather than padded post-convergence values. This distinction is
important for interpreting the representation as an early-trajectory
descriptor without introducing artificial constant tails into the
signals.

\subsection{Limitations}

The study is restricted to iterated Pearson correlation matrices under
one initialization law, with entries sampled independently from the
uniform distribution on \([-1,1]\). The empirical conclusions about
predictive performance, sensitivity, and geometric organization
therefore apply to this dynamical system and experimental ensemble.

The descriptor is constructed from a short early contraction segment.
Although the map from \(q_2,\ldots,q_7\) to the eight F-transform
coordinates is injective, information outside that observed segment is
not represented. The strong dependence of predictive performance on
segment length confirms that later observations can contain additional
information about the recorded convergence length.
Moreover, for 5.26\% of the tested trajectories, the \(10^{-6}\)
threshold is reached within or before the end of the full observed
segment. The benchmark is therefore not interpreted as a universally
pre-convergence online forecast.

The clustering results describe coarse empirical organization and
should not be interpreted as a proof of intrinsic dynamical classes.
Similarly, Random Forest feature importance is model-dependent,
particularly because the descriptor coordinates satisfy exact linear
dependencies.

The predictive comparisons are also conditional on the Random Forest
benchmark used in this study. Other downstream learners may exploit
the same representations differently, so the reported predictive
ranking should not be interpreted as model-independent.

The theoretical results established here concern the fixed
three-node, five-point construction. The rank characterization,
Lipschitz stability, and exact affine-trend recovery give a precise
mathematical description of this descriptor, but they do not by
themselves establish corresponding properties for arbitrary fuzzy
partitions, observation lengths, or other nonlinear dynamical systems.

\subsection{Future work}

Several extensions follow naturally from the present results. One
direction is to study how the rank and injectivity properties change
with the number of fuzzy nodes, observation length, and polynomial
degree of the F-transform. A second is to derive analytic relationships
between descriptor coordinates and known contraction properties of the
correlation map.

The empirical scope can also be extended to other initialization laws,
noisy trajectories, and empirical CONCOR or blockmodeling datasets.
Such experiments would determine which aspects of the representation
remain stable outside the controlled matrix ensemble studied here.

More generally, the same construction can be tested on other iterative
systems that admit meaningful step-size and contraction-ratio
observables. Such extensions should be treated as new empirical
questions rather than assumed from the present results. Comparison
with learned time-series representations and locality-preserving fuzzy
methods may also clarify the trade-off between analytically defined,
interpretable coordinates and data-adaptive latent representations.

\section{Reproducibility}
\label{sec:reproducibility}

The computational protocol is deterministic conditional on the stated
random seeds. For trajectory trial \(t\), matrix generation uses
\texttt{default\_rng(12345+t)} with independent
\(\mathcal U[-1,1]\) entries. The same Pearson correlation update,
Frobenius step definition, \(10^{-12}\) numerical floor, and
feature-construction formulas are used at every matrix dimension.

For the predictive experiments, Random Forest models use 300 trees
with estimator random state 42. Train--test partitions use seeds
\(0,\ldots,9\), and models are fitted separately for each matrix
dimension. The same partition is used across representation families
within each seed so that model comparisons are paired. PCA
preprocessing for the predictive baseline is fitted only on the
training portion of each split.

The archived computational materials include trajectory generation,
feature construction, repeated-split prediction, feature-importance
analysis, fuzzy-design sensitivity, convergence diagnostics, the
equal-dimensional hard local-summary comparison, PCA and clustering
analyses, and scripts that regenerate the numerical tables and figures
used in the manuscript from the stored outputs. The archive also records
the complete software environment and implementation settings used in
the experiments. The computational archive associated with this study
is deposited in Zenodo \cite{AlhajjHassan2026Code}.

\section{Conclusion}
\label{sec:conclusion}

This paper develops a two-channel F-transform representation for the
early contraction dynamics of iterated Pearson correlation matrices.
The construction starts from two observables with direct dynamical
meaning,
\[
\delta_k=\|P_{k+1}-P_k\|_F,
\qquad
\rho_k=\frac{\delta_{k+1}}{\delta_k},
\]
and represents their logarithmic step-size and contraction-ratio
signals through three fuzzy-local levels and one centered local trend
per channel.

The mathematical structure of the representation is exact. The eight
F-transform coordinates form a rank-six injective linear image of
\[
(q_2,\ldots,q_7),
\]
with two explicit linear dependencies. The observed six-value
log-step segment is therefore preserved exactly while being
reorganized into interpretable local coordinates for update magnitude
and contraction behavior. The construction also provides finite-window
Lipschitz stability and exact recovery of affine trends.

Across 22 matrix dimensions and 22,000 trajectories, the descriptor
retains convergence-length information close to the larger raw
two-channel and statistical-summary representations under a common
Random Forest benchmark. Observation-matched comparisons show that a
substantial part of the improvement over the step-size-only descriptor
comes from access to the additional trajectory value required by the
complete two-channel construction. The genuinely distinct nearby fuzzy-design alternatives tested have
comparatively small effects, whereas changing the amount of observed
trajectory information has a substantially larger effect.
An equal-dimensional hard local-summary representation gives similar
predictive performance, while the F-transform retains its systematic
overlapping fuzzy-local construction and exact analytic properties.

The representation also exhibits coherent empirical geometry. The
first two principal components explain on average \(84.47\%\) of the
descriptor variance across the tested dimensions, and clustering
reveals stable coarse organization, with the strongest mean silhouette
separation at \(k=2\). These geometric patterns are associated with
differences in convergence length.

The resulting contribution is a fixed-dimensional and interpretable
representation that is information-preserving with respect to the
observed six-value log-step segment. It provides a common coordinate system for
trajectory comparison, visualization, similarity analysis, and
downstream learning while retaining an explicit connection to the
underlying iterated-correlation dynamics.

\section*{Acknowledgments}

I thank Prof. Vil\'em Nov\'ak for introducing me to the theory of
fuzzy transforms and for valuable discussions. I am also grateful to
P. Zusmanovich for ongoing collaboration.

\section*{Data availability}

Software, data, computational outputs, and reproducibility materials
associated with this study are deposited in Zenodo
\cite{AlhajjHassan2026Code}.

\section*{Declaration of competing interests}

The author declares no known competing financial interests or personal
relationships that could have appeared to influence the work reported
in this paper.


\begin{thebibliography}{99}

\bibitem{AlhajjHassan2026Code}
Alhajj Hassan, I.
\newblock Software accompanying this study, Version 3.0.0 [software].
\newblock Zenodo, 2026.
\newblock doi:10.5281/zenodo.22009966.

\bibitem{EmpiricalLaws}
Alhajj Hassan, I.
\newblock Empirical Laws for Iterated Correlation Matrices.
\newblock arXiv:2512.15421, 2025.

\bibitem{FiniteBounds}
Alhajj Hassan, I.
\newblock Finite-step bounds for iterated correlation matrices.
\newblock \emph{Fixed Point Theory and Algorithms for Sciences and Engineering},
2026.
\newblock doi:10.1186/s13663-026-00844-6.

\bibitem{BreigerBoormanArabie1975}
Breiger, R.~L., Boorman, S.~A., Arabie, P.
\newblock An algorithm for clustering relational data with applications to
social network analysis and comparison with multidimensional scaling.
\newblock \emph{Journal of Mathematical Psychology} 12(3) (1975) 328--383.
\newblock doi:10.1016/0022-2496(75)90028-0.

\bibitem{Chen2002}
Chen, C.-H.
\newblock Generalized association plots: information visualization via
iteratively generated correlation matrices.
\newblock \emph{Statistica Sinica} 12(1) (2002) 7--29.

\bibitem{Fraikin2024}
Fraikin, A.~F., Bennetot, A., Allassonni\`ere, S.
\newblock T-Rep: Representation Learning for Time Series using Time-Embeddings.
\newblock \emph{International Conference on Learning Representations (ICLR)},
2024.

\bibitem{KreinovichPerfilieva2011}
Perfilieva, I., Kreinovich, V.
\newblock Fuzzy transforms of higher order approximate derivatives: A theorem.
\newblock \emph{Fuzzy Sets and Systems} 180(1) (2011) 55--68.
\newblock doi:10.1016/j.fss.2011.05.005.

\bibitem{KruskalCONCOR}
Kruskal, J.~B.
\newblock A theorem about CONCOR.
\newblock Technical Report MH-2-C--571,
Bell Laboratories, Murray Hill, NJ, 1978.

\bibitem{McQuittyClark1968}
McQuitty, L.~L., Clark, J.~A.
\newblock Clusters from iterative, intercolumnar correlational analysis.
\newblock \emph{Educational and Psychological Measurement}
28(2) (1968) 211--238.
\newblock doi:10.1177/001316446802800201.

\bibitem{NovakMirshahi2021}
Nov\'ak, V., Mirshahi, S.
\newblock On the similarity and dependence of time series.
\newblock \emph{Mathematics} 9(5) (2021) 550.
\newblock doi:10.3390/math9050550.

\bibitem{InsightFuzzyModeling}
Nov\'ak, V., Perfilieva, I., Dvo\v{r}\'ak, A.
\newblock \emph{Insight into Fuzzy Modeling}.
\newblock Wiley, Hoboken, NJ, 2016.

\bibitem{NovakPerfilievaHolcapekKreinovich2014}
Nov\'ak, V., Perfilieva, I., Hol\v{c}apek, M., Kreinovich, V.
\newblock Filtering out high frequencies in time series using F-transform.
\newblock \emph{Information Sciences} 274 (2014) 192--209.
\newblock doi:10.1016/j.ins.2014.02.133.

\bibitem{PedryczGranulation}
Pedrycz, W.
\newblock \emph{Granular Computing: Analysis and Design of Intelligent Systems}.
\newblock CRC Press, Boca Raton, FL, 2013.

\bibitem{Perfilieva2006}
Perfilieva, I.
\newblock Fuzzy transforms: theory and applications.
\newblock \emph{Fuzzy Sets and Systems} 157(8) (2006) 993--1023.
\newblock doi:10.1016/j.fss.2005.11.012.

\bibitem{PerfilievaDankovaBede2011}
Perfilieva, I., Da\v{n}kov\'a, M., Bede, B.
\newblock Towards a higher degree F-transform.
\newblock \emph{Fuzzy Sets and Systems} 180(1) (2011) 3--19.
\newblock doi:10.1016/j.fss.2010.11.002.

\bibitem{Yue2022}
Yue, Z., Wang, Y., Duan, J., Yang, T., Huang, C., Tong, Y., Xu, B.
\newblock TS2Vec: Towards universal representation of time series.
\newblock \emph{Proceedings of the AAAI Conference on Artificial Intelligence}
36(8) (2022) 8980--8987.
\newblock doi:10.1609/aaai.v36i8.20881.

\bibitem{ZadehGranulation}
Zadeh, L.~A.
\newblock Toward a theory of fuzzy information granulation and its centrality
in human reasoning and fuzzy logic.
\newblock \emph{Fuzzy Sets and Systems} 90(2) (1997) 111--127.
\newblock doi:10.1016/S0165-0114(97)00077-8.

\bibitem{Zhou2021}
Zhou, J., Pedrycz, W., Yue, X., Gao, C., Lai, Z., Wan, J.
\newblock Projected fuzzy C-means clustering with locality preservation.
\newblock \emph{Pattern Recognition} 113 (2021) 107748.
\newblock doi:10.1016/j.patcog.2020.107748.

\end{thebibliography}
\end{document}